\documentclass[11pt,twoside,reqno]{amsart}

\usepackage{amssymb, latexsym}
\usepackage[italian,english]{babel}
\usepackage{amsmath,amsfonts,amsthm}
\usepackage{paralist}
\usepackage[top=2.50cm, bottom=2.50cm, left=2.50cm, right=2.50cm]{geometry}

\usepackage{color}


\numberwithin{equation}{section}

\usepackage{xspace}
\usepackage[bookmarksnumbered,colorlinks]{hyperref}
\usepackage{graphics}
\def\bb#1\eb{\textcolor{blue}
{#1}} %
\def\br#1\er{\textcolor{red}
{#1}} %
\def\bv#1\ev{\textcolor{green}
{#1}} %
\def\bc#1\ec{\textcolor{cyan}
{#1}} %

\usepackage{graphics}



\def\Xint#1{\mathchoice
  {\XXint\displaystyle\textstyle{#1}}%
  {\XXint\textstyle\scriptstyle{#1}}%
  {\XXint\scriptstyle\scriptscriptstyle{#1}}%
  {\XXint\scriptscriptstyle\scriptscriptstyle{#1}}%
  \!\int}
\def\XXint#1#2#3{{\setbox0=\hbox{$#1{#2#3}{\int}$}
  \vcenter{\hbox{$#2#3$}}\kern-.5\wd0}}
\def\-int{\Xint -}

\newcommand{\R}{\mathbb{R}}
\newcommand{\N}{\mathbb{N}}

\DeclareMathOperator{\supp}{supp}
\DeclareMathOperator{\B}{\mathcal{B}}
\DeclareMathOperator{\K}{\mathcal{K}}
\DeclareMathOperator{\X}{\mathbb{X}}

\newtheorem{prop}{Proposition}[section]
\newtheorem{lem}{Lemma}[section]
\newtheorem{thm}{Theorem}[section]

\newtheorem{remark}{Remark}[section]

\begin{document}
\title[Sign-changing solutions]{Sign-changing solutions for a class of zero mass nonlocal Schr\"odinger equations}

\author[V. Ambrosio]{Vincenzo Ambrosio}
\address{Vincenzo Ambrosio\hfill\break\indent 
Department of Mathematics  \hfill\break\indent
EPFL SB CAMA \hfill\break\indent
Station 8 CH-1015 Lausanne, Switzerland}
\email{vincenzo.ambrosio@uniurb.it}

\author[G. M. Figueiredo]{Giovany M. Figueiredo}
\address{Departamento de Matematica \hfill\break\indent
Universidade de Brasilia-UNB \hfill\break\indent
CEP:70910-900 Brasilia, DF \hfill\break\indent
Brazil}
\email{giovany@ufpa.br}

\author[T. Isernia]{Teresa Isernia}
\address{Teresa Isernia\hfill\break\indent
Dipartimento di Ingegneria Industriale e Scienze Matematiche \hfill\break\indent
Universit\`a Politecnica delle Marche\hfill\break\indent
Via Brecce Bianche, 12\hfill\break\indent
60131 Ancona (Italy)}
\email{teresa.isernia@unina.it}

\author[G. Molica Bisci]{Giovanni Molica Bisci}
\address{Dipartimento P.A.U., \hfill\break\indent
Universit\`a  degli Studi Mediterranea di Reggio Calabria, \hfill\break\indent
Salita Melissari - Feo di
Vito, 89100 Reggio Calabria, Italy}
\email{gmolica@unirc.it}

\keywords{Fractional Laplacian; potential vanishing at infinity; Nehari manifold; sign-changing solutions; Deformation Lemma}
\subjclass[2010]{35A15, 35J60, 35R11, 45G05}

\date{}

\date{}

\begin{abstract}
We consider the following class of fractional Schr\"odinger equations
$$
(-\Delta)^{\alpha} u + V(x)u = K(x) f(u) \mbox{ in } \R^{N}
$$
where $\alpha\in (0, 1)$, $N>2\alpha$, $(-\Delta)^{\alpha}$ is the fractional Laplacian, $V$ and $K$ are positive continuous functions which vanish at infinity, and $f$ is a continuous function.
By using a minimization argument and a quantitative deformation lemma, we obtain the existence of a sign-changing solution. Furthermore, when $f$ is odd, we prove that the above problem admits infinitely many nontrivial solutions. Our result extends to the fractional framework some well-known theorems proved for elliptic equations in the classical setting. With respect to these cases studied in the literature, the nonlocal one considered here presents some additional difficulties,
such as the lack of decompositions involving positive and negative parts, and the non-differentiability of the Nehari Manifold, so that a careful analysis of the fractional spaces involved is necessary.
\end{abstract}

\maketitle

\section{Introduction}

\noindent

A very interesting area of nonlinear analysis lies in the
study of elliptic equations involving fractional operators. Recently,
great attention has been focused on these problems, both for the pure
mathematical research and in view of concrete real-world applications.
Indeed, this type of operators appears in a quite natural way in different
contexts, such as the description of several physical phenomena (see \cite{BV, DPV, MRS}).

In this paper we study the existence of least energy sign-changing (or nodal) solutions for the nonlinear problem involving the fractional Laplacian
\begin{equation}\label{P}
\left\{
\begin{array}{ll}
(-\Delta)^{\alpha} u + V(x)u = K(x) f(u) &\mbox{ in } \R^{N} \\
u\in \mathcal{D}^{\alpha, 2}(\R^{N})
\end{array}
\right.
\end{equation}
with $\alpha\in (0,1)$, $N>2\alpha$ and $\mathcal{D}^{\alpha, 2}(\R^{N})$ being defined as the completion of $u\in C^{\infty}_{c}(\R^{N})$ with respect to the Gagliardo semi-norm
$$
[u]:=\left(\iint_{\R^{2N}} \frac{|u(x)-u(y)|^{2}}{|x-y|^{N+2\alpha}} dx dy\right)^{1/2},
$$
where $C^{\infty}_{c}(\R^{N})$ is the space of smooth functions with compact support. \par
\indent The operator $(-\Delta)^{\alpha}$ is the so-called fractional Laplacian which, up to a positive constant, may be defined through the kernel representation
$$
(-\Delta)^{\alpha}u(x):={\rm P.V.}\int_{\R^{N}} \frac{u(x)-u(y)}{|x-y|^{N+2\alpha}} dy,
$$
for every $u:\R^{N}\rightarrow \R$ sufficiently smooth. Here, the abbreviation P.V. stands for ``in the principal value sense".\par
For an elementary introduction to the fractional Laplacian and fractional Sobolev spaces we refer the interested reader to \cite{DPV, MRS}.\par
\indent Equation (\ref{P}) appears in the study of standing wave solutions $\psi(x, t)=u(x) e^{-\imath \omega t}$
to the following fractional Schr\"odinger equation
$$
\imath \hbar \frac{\partial\psi}{\partial t}=\hbar^{2}(-\Delta)^{\alpha}\psi+W(x) \psi - f(|\psi|) \mbox{ in } \R^{N}
$$
where $\hbar$ is the Planck constant, $W:\R^{N}\rightarrow \R$ is an external potential and $f$ a suitable nonlinearity.
The fractional Schr\"odinger equation is one of the most important objects of the fractional quantum mechanics because it appears in problems involving nonlinear optics, plasma physics and condensed matter physics.
The previous equation has been introduced for the first time by Laskin \cite{Laskin1, Laskin2} as a result of expanding the Feynman path integral, from the Brownian-like to the L\'evy-like quantum mechanical paths.
To be physically consistent, one of the main features of the Schr\"odinger equation is that, in the semi-classical limit in which the diffusion operator arises as a singular perturbation, the wave functions ``concentrate" into ``particles"; see \cite{DDPW, DDPDV} for more details.\par
Subsequently, many papers appeared studying existence, multiplicity, and behavior of solutions to fractional Schr\"odinger equations \cite{AM, A0, A1, A2, AmbFig, AH, AI2, DiPV} as well as \cite{FQT, FS, FLS, Isernia, MR, Secchi1}.
Generally, nonlinear problems involving nonlocal operators have received a great interest  from the mathematical community thanks to their intriguing structure and in view of several applications, such as, phase transition, optimization, obstacle problems, minimal surfaces and regularity theory; see, among others, the papers \cite{Cab, CRS, CSS, CS, ming1}. 

\indent
When $\alpha=1$, the equation in (\ref{P}) becomes the classical nonlinear Schr\"odinger equation
\begin{equation}\label{CSE}
-\Delta u+V(x) u=K(x) f(u) \mbox{ in } \R^{N},
\end{equation}
which has been extensively studied in the last twenty years. We do not intend to review the huge bibliography of equations like \eqref{CSE},
we just emphasize that the potential $V:\mathbb{R}^n\to \mathbb{R}$ has
a crucial role concerning the existence and behavior of
solutions. For instance, when $V$ is a positive constant,
 or $V$ is radially symmetric, it is natural to look
for radially symmetric solutions,
see  \cite{S,Willem}.
On the other hand, after the seminal paper of
Rabinowitz \cite{seminalrabinowitz}, where the potential $V$
is assumed to be coercive, several different assumptions are adopted in order to obtain
existence and multiplicity results, see \cite{BLW, BPW, GR}.
An important class of problems associated with (\ref{CSE}) is the zero mass case, which occurs when
$$
\lim_{|x|\rightarrow +\infty} V(x)=0.
$$
\indent To study these problems, many authors used several variational methods; see \cite{AS1, AFM, AW, BF, BGM2} and \cite{BL1, BVS, FJ} for problems set in $\R^{N}$, as well as \cite{AS2, BW, BWW, CCN} for problems in bounded domain with homogeneous boundary conditions.\par
\indent
We notice that there is a huge literature on these classical topics, and, in recent years, also a lot of papers related to the study of fractional and nonlocal operators of elliptic type, through critical point theory, appeared. Indeed, a natural question is whether or not these techniques may be adapted in order to investigate the fractional analogue of the classical elliptic case. 
We refer to the recent books \cite{DiMV, MRS}, in which the analysis of some fractional elliptic problems, via classical variational methods and other novel approaches (see, for instance, \cite{AP, FPS, pu1,pu2}) is performed. \\
In this spirit, the goal of the present paper is to study the nonlocal counterpart of (\ref{CSE}), and to prove the existence of sign-changing solutions to problem (\ref{P}).\\
Before stating our main result, we introduce the basic assumptions on $V, K$ and $f$.\par

\indent More precisely, we suppose that the functions $V, K : \R^{N} \rightarrow \R$ are continuous on $\R^{N}$, and we say that $(V, K)\in \K$ if the following conditions hold:
\begin{compactenum}[($h_1$)]
\item $V(x), K(x)>0$ \textit{for all} $x \in \R^{N}$ \textit{and} $K \in L^{\infty}(\R^{N})$;
\item \textit{If $\{A_{n}\}_{n\in \N}\subset \R^{N}$ is a sequence of Borel sets such that the Lebesgue measure $m(A_{n})$ is less than or equal to $R$, for all $n\in \N$ and some $R>0$, then
\begin{equation*}
\lim_{r\rightarrow +\infty} \int_{A_{n}\cap \B_{r}^{c}(0)} K(x) \, dx =0, \,
\end{equation*}
uniformly in $n\in \N$, where $\B_{r}^{c}(0):=\R^N\setminus \B_{r}(0)$ and
$$
\B_{r}(0):=\{x\in\R^N:|x|<r\}.
$$
}
\end{compactenum}

\indent Furthermore, one of the following conditions occurs
\begin{compactenum}[($h_3$)]
\item $\displaystyle{{K}/{V}\in L^{\infty}(\R^{N}) }$
\end{compactenum}
or
\begin{compactenum}[($h_4$)]
\item \textit{there exists $m\in (2, 2^{*}_{\alpha})$ such that}
\begin{equation*}
\frac{K(x)}{V(x)^{\frac{2^{*}_{\alpha}-m}{(2^{*}_{\alpha}-2)}}} \rightarrow 0 \, \mbox{\textit{ as} } |x|\rightarrow +\infty,
\end{equation*}
\textit{where} $\displaystyle{2^{*}_{\alpha}:= \frac{2N}{N-2\alpha}}$.
\end{compactenum}

\smallskip

We recall that the assumptions $(h_{1})$-$(h_{4})$ were introduced for the first time by Alves and Souto in \cite{AS1}. 
It is very important to observe that $(h_2)$ is weaker than any one of the conditions below used in the above-mentioned papers to study zero-mass problems:
\begin{compactenum}[$(a)$]
\item there are $r\geq 1$ and $\rho\geq 0$ such that $K\in L^{r}(\R^{N}\setminus \B_{\rho}(0))$;
\item $K(x)\rightarrow 0$ as $|x|\rightarrow \infty$;
\item $K=H_1+H_2$, with $H_1$ and $H_2$ verifying $(a)$ and $(b)$ respectively.
\end{compactenum}
Now, we provide some examples of functions $V$ and $K$ satisfying $(h_1)$-$(h_4)$. Let $\{B_{n}\}_{n\in \mathbb{N}}$ be a disjoint sequence of open balls in $\R^N$ centered in $\xi_{n}=(n, 0,\dots, 0)$ and consider a nonnegative function $H_3$ such that
$$
H_3=0 \mbox{ in } \R^{N}\setminus \bigcup_{n=1}^{\infty} \B_{n}, \quad H_{3}(\xi_{n})=1\, \mbox{ and } \,\int_{\B_{n}} H_{3}(x)dx=2^{-n}.
$$
Then, the pairs $(V, K)$ given by 
$$
K(x)=V(x)=H_{3}(x)+\frac{1}{\log(2+|x|)}
$$ 
and 
$$
K(x)=H_{3}(x)+\frac{1}{\log(2+|x|)} \quad \mbox{ and } \quad V(x)=H_{3}(x)+\left(\frac{1}{\log(2+|x|)}\right)^{\frac{2^{*}_{\alpha}-2}{2^{*}_{\alpha}-m}}
$$
for some $m\in (2, 2^{*}_{\alpha})$, belong to the class $\mathcal{K}$.

For the nonlinearity $f: \R\rightarrow \R$, we assume that it is a $C^{0}$-function and satisfies the following growth conditions in the origin and at infinity:
\begin{compactenum}[($f_1$)]
\item $\displaystyle{\lim_{|t|\rightarrow 0} \frac{f(t)}{|t|}=0} \mbox{ \textit{if }} (h_3) \mbox{ \textit{holds}}$ or
\end{compactenum}
\begin{compactenum}[($\tilde{f}_1$)]
\item $\displaystyle{\lim_{|t|\rightarrow 0} \frac{f(t)}{|t|^{m-1}}<+\infty} \mbox{ \textit{if} } (h_4)$ \textit{holds.}

\item [($f_{2}$)] $f$ \textit{has a quasicritical growth at infinity, namely}
\begin{equation*}
\lim_{|t|\rightarrow +\infty} \frac{f(t)}{|t|^{2^{*}_{\alpha}-1}}=0;
\end{equation*}

\item [($f_{3}$)] $F$ \textit{has a superquadratic growth at infinity, that is}
\begin{equation*}
\lim_{|t|\rightarrow +\infty} \frac{F(t)}{|t|^{2}}=+\infty,
\end{equation*}
\textit{where, as usual, we set} $\displaystyle{F(t):=\int_{0}^{t} f(\tau) d\tau}$;

\item [($f_{4}$)] The map $\displaystyle{t \mapsto \frac{f(t)}{|t|}}$ is strictly increasing for every $t\in \R\setminus\{0\}$.

\end{compactenum}


\smallskip

As models for $f$ we can take, for instance, the following nonlinearities
\begin{equation*}
f(t)= (t^{+}) ^{m} \quad \mbox{ and } \quad
f(t)= 
\left\{
\begin{array}{ll}
\log 2(t^{+})^{m} &\mbox{ if } t\leq 1\\
t \log(1+ t) & \mbox{ if } t>1,
\end{array}
\right.
\end{equation*}
for some $m\in (2, 2^{*}_{\alpha})$. 
\begin{remark}\label{rem1}\rm{
We notice that by $(f_4)$ it follows that the real function
\begin{align}\begin{split} \label{4.23}
t\mapsto \frac{1}{2} f(t)t - F(t) 
\end{split}\end{align}}
is strictly increasing for every $t>0$ and strictly decreasing for every $t<0$.
\end{remark}

Now, we are ready to state the main result of this paper. 
\begin{thm}\label{thmf}
Suppose that $(V, K)\in \K$ and $f\in C^{0}(\R, \R)$ verifies either $(f_1)$ or $(\tilde{f}_1)$ and $(f_2)-(f_4)$. Then, problem \eqref{P} possesses a least energy nodal weak solution. In addition, if the nonlinear term $f$ is odd, then problem \eqref{P} has infinitely many nontrivial weak solutions not necessarily nodals.
\end{thm}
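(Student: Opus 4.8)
The plan is to adapt the scheme of Alves--Souto \cite{AS1} --- constrained minimization over a nodal Nehari set, followed by a quantitative deformation lemma --- to the nonlocal setting, where the orthogonal splitting $\|u\|^{2}=\|u^{+}\|^{2}+\|u^{-}\|^{2}$ is lost. We work in the Hilbert space $\X$, the completion of $C^{\infty}_{c}(\R^{N})$ with respect to $\|u\|^{2}:=[u]^{2}+\int_{\R^{N}}V(x)u^{2}\,dx$, with scalar product $\langle u,v\rangle:=B(u,v)+\int_{\R^{N}}Vuv$, where $B(u,v):=\iint_{\R^{2N}}\frac{(u(x)-u(y))(v(x)-v(y))}{|x-y|^{N+2\alpha}}\,dx\,dy$. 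The backbone is a compactness lemma (the fractional version of the one in \cite{AS1}): under $(h_{1})$--$(h_{2})$ together with $(h_{3})$ (resp.\ $(h_{4})$), the embedding $\X\hookrightarrow L^{p}_{K}(\R^{N})$, where $L^{p}_{K}:=\{u:\int_{\R^{N}}K|u|^{p}<\infty\}$, is continuous for $p\in[2,2^{*}_{\alpha}]$ and compact for $p\in[2,2^{*}_{\alpha})$ (resp.\ continuous and compact for $p=m$); the proof splits $\R^{N}$ into a large ball, where one uses the local Rellich theorem, and its complement, controlled by $(h_{2})$ for the exponent $2$ and by Sobolev embedding together with $(h_{3})$ or $(h_{4})$ for the critical exponent, then interpolates. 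Combining this with $(f_{1})$ or $(\tilde f_{1})$ and $(f_{2})$, which give $|f(t)|\le\varepsilon(|t|+|t|^{2^{*}_{\alpha}-1})+C_{\varepsilon}|t|^{p-1}$ (with $p\in(2,2^{*}_{\alpha})$, resp.\ $p=m$), one obtains that $I(u):=\frac12\|u\|^{2}-\int_{\R^{N}}K(x)F(u)\,dx$ is of class $C^{1}$ on $\X$ and that $u\mapsto\int_{\R^{N}}KF(u)$ and $u\mapsto\int_{\R^{N}}Kf(u)u$ are sequentially weakly continuous; weak solutions of \eqref{P} are exactly the critical points of $I$.

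Next introduce
$$\mathcal{M}:=\{u\in\X:\ u^{+}\neq0,\ u^{-}\neq0,\ \langle I'(u),u^{+}\rangle=0=\langle I'(u),u^{-}\rangle\},$$
and, for $u$ with $u^{\pm}\neq0$, the fibering map $\psi_{u}(s,t):=I(su^{+}-tu^{-})$. Writing $\delta(u):=-B(u^{+},u^{-})=\iint_{\R^{2N}}\frac{u^{+}(x)u^{-}(y)+u^{+}(y)u^{-}(x)}{|x-y|^{N+2\alpha}}\,dx\,dy>0$, one has the identity
$$\psi_{u}(s,t)=\frac{s^{2}}{2}\|u^{+}\|^{2}+st\,\delta(u)+\frac{t^{2}}{2}\|u^{-}\|^{2}-\int_{\R^{N}}K\big(F(su^{+})+F(-tu^{-})\big)\,dx,$$
whose coupling term $st\,\delta(u)$ is the nonlocal obstruction, and where $\langle I'(u),u^{+}\rangle=\partial_{s}\psi_{u}(1,1)$ and $\langle I'(u),u^{-}\rangle=-\partial_{t}\psi_{u}(1,1)$ are \emph{not} of the form $I'(u^{\pm})[u^{\pm}]$. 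The key preliminary fact is: for each such $u$ there is a \emph{unique} pair $(s_{u},t_{u})\in(0,\infty)^{2}$ with $s_{u}u^{+}-t_{u}u^{-}\in\mathcal{M}$, and $(s_{u},t_{u})$ is the unique global maximum of $\psi_{u}$ on $[0,\infty)^{2}$. Existence of an interior critical point follows from $(f_{2})$--$(f_{3})$: $\psi_{u}\to-\infty$ at infinity (using $\tfrac{f(t)}{t}\to+\infty$, a consequence of $(f_{3})$ and $(f_{4})$ via $F(t)\le\tfrac12 f(t)t$) while $\psi_{u}>0$ near the diagonal, and $\partial_{t}\psi_{u}(s,0)=s\,\delta(u)>0$, $\partial_{s}\psi_{u}(0,t)=t\,\delta(u)>0$ push the maximum off the axes. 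Uniqueness and maximality come from $(f_{4})$: at any interior critical point one computes $\partial_{ss}\psi_{u}<-\tfrac{t}{s}\delta(u)<0$, $\partial_{tt}\psi_{u}<-\tfrac{s}{t}\delta(u)<0$, $\partial_{st}\psi_{u}=\delta(u)$, whence $\det\mathrm{Hess}\,\psi_{u}>\delta(u)^{2}-\delta(u)^{2}=0$ and the Hessian is negative definite, so critical points are isolated and there is exactly one. As by-products, every $u\in\mathcal{M}$ lies in the Nehari manifold $\mathcal{N}=\{v\neq0:\langle I'(v),v\rangle=0\}$, on which $\|v\|\ge\rho_{0}>0$ and $I(v)=\max_{\tau\ge0}I(\tau v)\ge\inf_{\|w\|=\rho_{0}}I=:c_{0}>0$; hence $c:=\inf_{\mathcal{M}}I\ge c_{0}>0$, and $I$ is coercive on $\mathcal{M}$ by $(f_{3})$.

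To see that $c$ is attained, take a minimizing sequence $\{u_{n}\}\subset\mathcal{M}$; it is bounded by coercivity, so up to a subsequence $u_{n}\rightharpoonup u$ in $\X$ and, by compactness, $u_{n}^{\pm}\to u^{\pm}$ in $L^{p}_{K}$ ($v\mapsto v^{\pm}$ is $1$-Lipschitz into $L^{p}_{K}$); from $\int_{\R^{N}}Kf(u_{n}^{\pm})u_{n}^{\pm}=\|u_{n}^{\pm}\|^{2}+\delta(u_{n})\ge\rho_{0}^{2}$ and the growth of $f$ we get $\int_{\R^{N}}K|u_{n}^{\pm}|^{p}\ge\kappa>0$, hence $u^{\pm}\neq0$. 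With $(s,t)=(s_{u},t_{u})$, set $\bar u:=su^{+}-tu^{-}\in\mathcal{M}$, so $I(\bar u)\ge c$; conversely, since $su_{n}^{+}-tu_{n}^{-}\rightharpoonup\bar u$, weak lower semicontinuity of $\|\cdot\|^{2}$, weak continuity of the nonlinear term, and the fact that $u_{n}$ maximizes $I$ on $\{au_{n}^{+}-bu_{n}^{-}:a,b\ge0\}$ give $c\le I(\bar u)\le\liminf_{n}I(su_{n}^{+}-tu_{n}^{-})\le\liminf_{n}I(u_{n})=c$. Thus $w:=\bar u$ is a sign-changing minimizer and $(1,1)$ is the strict maximum of $\psi_{w}$. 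Now suppose, for contradiction, $I'(w)\neq0$; pick $\mu,\lambda>0$ with $\|I'(v)\|\ge\mu$ on $\overline{\B_{3\lambda}(w)}$ and $\lambda$ so small that every $v\in\B_{3\lambda}(w)$ still has $v^{\pm}\neq0$. Choose a small closed disc $D\ni(1,1)$ in $(0,\infty)^{2}$ with $\Phi_{w}(D)\subset\B_{\lambda}(w)$ and $4\varepsilon:=c-\max_{\partial D}I\circ\Phi_{w}>0$, where $\Phi_{w}(s,t):=sw^{+}-tw^{-}$, and apply the quantitative deformation lemma \cite{Willem} to get $\eta\in C(\X,\X)$ with $I\circ\eta\le I$, $\eta=\mathrm{id}$ off $I^{-1}([c-2\varepsilon,c+2\varepsilon])\cap\B_{2\lambda}(w)$, and $I(\eta(v))\le c-\varepsilon$ whenever $I(v)\le c+\varepsilon$ and $v\in\B_{\lambda}(w)$. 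Then $\tilde\Phi:=\eta\circ\Phi_{w}$ coincides with $\Phi_{w}$ on $\partial D$, satisfies $I\circ\tilde\Phi<c$ on $D$, and keeps both sign parts nontrivial. Defining $\Psi(s,t):=\big(\langle I'(\tilde\Phi(s,t)),\tilde\Phi(s,t)^{+}\rangle,\ \langle I'(\tilde\Phi(s,t)),\tilde\Phi(s,t)^{-}\rangle\big)$, on $\partial D$ it equals $(s\,\partial_{s}\psi_{w}(s,t),-t\,\partial_{t}\psi_{w}(s,t))$, whose only zero in $\overline D$ is $(1,1)$, with Brouwer index $\mathrm{sgn}\!\big(-\det\mathrm{Hess}\,\psi_{w}(1,1)\big)\neq0$; since the boundary values are unchanged by $\eta$, $\deg(\Psi,D,0)\neq0$, so $\Psi$ vanishes at some $(s_{0},t_{0})\in D$, and $\tilde\Phi(s_{0},t_{0})\in\mathcal{M}$ with $I(\tilde\Phi(s_{0},t_{0}))<c$ --- contradicting $c=\inf_{\mathcal{M}}I$. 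Hence $I'(w)=0$: $w$ solves \eqref{P} with $w^{\pm}\neq0$, and since any sign-changing solution lies in $\mathcal{M}$, $w$ is a least energy nodal solution. I expect the main difficulty to be exactly this two-parameter analysis: the loss of $\|u\|^{2}=\|u^{+}\|^{2}+\|u^{-}\|^{2}$ makes the fibering map genuinely depend on two coupled variables, so the cross term $\delta(u)$ must be carried through the projection lemma, the minimization, and the Brouwer-degree computation in the deformation step, where keeping signs and the strict monotonicity from $(f_{4})$ under control is delicate.

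Finally, when $f$ is odd, $I$ is even with $I(0)=0$; the growth estimates above give $\inf_{\|u\|=\rho}I>0$ for small $\rho$, while $(f_{3})$ gives (via Fatou and $K>0$) $I\to-\infty$ on every finite-dimensional subspace, and the compact embedding yields the Palais--Smale condition at every level --- bounded Palais--Smale sequences by the usual argument combining $(f_{3})$ with the monotonicity in $(f_{4})$, and compactness upgrading weak to strong convergence. The $\mathbb{Z}_{2}$-symmetric mountain pass theorem \cite{Willem} then produces an unbounded sequence of critical values, hence infinitely many nontrivial weak solutions of \eqref{P}, not claimed to be nodal.
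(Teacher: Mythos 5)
Your overall architecture is the same as the paper's (nodal Nehari set $\mathcal{M}$, two-parameter fibering map carrying the nonlocal cross term $\delta(u)$, constrained minimization, then a quantitative deformation lemma plus a Brouwer degree argument), but there is a genuine gap at the two places where you invoke the Hessian of the fibering map. The theorem only assumes $f\in C^{0}(\R,\R)$, so $F\in C^{1}$, $J\in C^{1}(\X,\R)$, and $\psi_{u}(s,t)=I(su^{+}-tu^{-})$ is $C^{1}$ but \emph{not} $C^{2}$: the quantities $\partial_{ss}\psi_{u}$, $\partial_{tt}\psi_{u}$ and $\det\mathrm{Hess}\,\psi_{u}$ that you use (i) to prove uniqueness of the interior critical point of $\psi_{u}$ and (ii) to compute the local Brouwer index at $(1,1)$ in the deformation step simply do not exist (your bound $\partial_{ss}\psi_{u}<-\tfrac{t}{s}\delta(u)$ requires $f'$ via $(f_4)$, i.e.\ $f\in C^{1}$). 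This is precisely the difficulty the paper is designed to overcome: it improves the earlier result of \cite{AI} where $f\in C^{1}$ was assumed. The paper's substitutes are: for uniqueness, a direct comparison argument using only the strict monotonicity of $t\mapsto f(t)/|t|$ (reduce to $w\in\mathcal{M}$, assume $0<t_{0}\le s_{0}$, divide the two Nehari-type identities by $s_{0}^{2}$ and $t_{0}^{2}$ and exploit the sign of the cross term to force $t_{0}=s_{0}=1$), together with a Brouwer fixed point construction for existence; for the degree, a decoupled boundary map $\psi_{0}(t,s)=(t\gamma_{+}'(t),s\gamma_{-}'(s))$ whose degree is computed after approximating the $C^{1}$ one-variable functions $\gamma_{\pm}$ by smooth ones preserving the unique maximum, followed by an iterative shrinking-rectangle procedure to place the zero of $\psi_{1}$ at $(1,1)$. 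Without replacing your Hessian computations by arguments of this kind, the uniqueness of $(s_{u},t_{u})$ and the nonvanishing of $\deg(\Psi,D,0)$ are unproved under the stated hypotheses.

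A secondary gap is in the multiplicity statement: you apply the $\mathbb{Z}_{2}$ mountain pass theorem directly to $I$ on $\X$ and assert that Palais--Smale sequences are bounded ``by the usual argument combining $(f_{3})$ with $(f_{4})$''. Without the Ambrosetti--Rabinowitz condition this is not routine for \emph{arbitrary} (PS) sequences; the boundedness argument you ran for the minimizing sequence uses crucially that it lies in $\mathcal{M}\subset\mathcal{N}$. The paper avoids the issue by the Szulkin--Weth reduction: it works with the even functional $\psi=J\circ\hat{\eta}$ restricted to the unit sphere $\mathbb{S}$, which is bounded below, transfers (PS) sequences to Nehari-constrained ones (Proposition \ref{propz2}), and then invokes the classical theory for even functionals on the sphere. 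Your minimization step itself (passing the weak limit through the projection onto $\mathcal{M}$ and squeezing $c\le I(\bar u)\le\liminf I(u_{n})=c$) is fine and essentially equivalent to the paper's computation \eqref{c19}.
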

The proof of Theorem \ref{thmf} is obtained by exploiting variational arguments. One of the main difficulties in the study of problem \eqref{P} is related to the presence of the fractional Laplacian $(-\Delta)^{\alpha}$ which is a nonlocal operator. Indeed, the Euler-Lagrange functional associated to the problem \eqref{P}, that is 
\begin{equation*}
J(u):= \frac{1}{2}\left([u]^{2} + \int_{\R^{N}} V(x)u^{2}\, dx \right) - \int_{\R^{N}} K(x) F(u) \, dx,
\end{equation*}
does not satisfy the decompositions (see Section \ref{tec1})
\begin{align*}
&\langle J'(u), u^{\pm} \rangle= \langle J'(u^{\pm}), u^{\pm} \rangle \\
&J(u)= J(u^{+})+ J(u^{-}),
\end{align*}
which were fundamental in the application of variational methods to study (\ref{CSE}); see \cite{BF, BWW, SW, Willem}. \\
\indent Anyway, along the paper we prove that the geometry of the classical minimization theorem is respected in the nonlocal framework: more precisely, we develop a functional analytical setting that is inspired by (but not equivalent to) the fractional Sobolev spaces, in order to correctly encode the variational formulation of problem (\ref{P}); see Section \ref{prel}. Secondly, the nonlinearity $f$ is only continuous, so to overcome the nondifferentiability of the Nehari manifold associated to $J$, we adapt in our framework, some ideas developed in \cite{SW}. Of course, also the compactness properties (see Proposition \ref{propz2}) required by these abstract theorems are satisfied in the nonlocal case, thanks to our functional setting. \\
\indent Then, in order to obtain nodal solutions, we look for critical points of $J(tu^{+}+su^{-})$, and, due to the fact that $f$ is only continuous, we do not apply the Miranda's theorem \cite{Miranda} as in \cite{AS2, BF}, but we use an iterative procedure and the properties of $J$ to prove the existence of a sequence which converges to a critical point of $J(tu^{+}+su^{-})$ (see Lemma \ref{lemz11}).\\
\indent Finally, we emphasize that Theorem \ref{thmf} improves the recent result established in \cite{AI}, in which the existence of a least energy nodal solution to problem (\ref{P}) has been proved under the stronger assumption that $f\in C^{1}$ and satisfies the Ambrosetti-Rabinowitz condition.

\indent
The paper is organized as follows. In Section \ref{prel} we present the variational framework of the problem and compactness results which will be useful for the next sections. In Section \ref{ene}, we obtain some preliminary results which are useful to overcome the lack of differentiability of  the Nehari manifold in which we look for weak solutions to problem (\ref{P}).
In Section \ref{tec1} we provide the proofs of some technical lemmas. Finally, in Section \ref{fine} we prove the existence of a least energy nodal weak solution by using minimization arguments and a variant of the Deformation Lemma.

\section{Preliminary results}\label{prel}

\noindent
This section is devoted to the notations used along the present paper. In order to give the weak formulation of problem~\eqref{P}, we need to work in a special functional space. Indeed, one of the difficulties in treating problem~\eqref{P} is related to his variational formulation. With respect to this, the standard fractional Sobolev spaces are not sufficient in order to study the problem. We overcome this difficulty by working in a suitable functional space, whose definition and basic analytical properties are recalled here.\par
For $\alpha\in (0, 1)$, we denote by $\mathcal{D}^{\alpha, 2}(\R^{N})$ the completion of $C^{\infty}_{c}(\R^{N})$ with respect to the so called Gagliardo semi-norm
$$
[u]:=\left(\iint_{\R^{2N}} \frac{|u(x)-u(y)|^{2}}{|x-y|^{N+2\alpha}} dx dy\right)^{1/2}
$$
and $H^{\alpha}(\R^{N})$ denotes the standard fractional Sobolev space, defined as the set of $u\in \mathcal{D}^{\alpha, 2}(\R^{N})$ satisfying $u\in L^{2}(\R^{N})$ with the norm
$$
\|u\|_{H^{\alpha}(\R^{N})}:=\left([u]^{2}+\|u\|^{2}_{L^{2}(\R^{N})}\right)^{1/2}.
$$
\indent Let us introduce the following functional space
\begin{equation*}
\X:= \left\{u \in \mathcal{D}^{\alpha, 2}(\R^{N}) \, : \, \int_{\R^{N}} V(x) |u|^{2} \, dx <+\infty \right\}
\end{equation*}
endowed with the norm
\begin{equation*}
\|u\|:= \left([u]^{2} + \int_{\R^{N}} V(x)|u|^{2} dx\right)^{1/2}.
\end{equation*}

\indent
At this point, we define, for $q \in \R$ with $q\geq 1$, the Lebesgue space $L^{q}_{K}(\R^{N})$ as
\begin{equation*}
L_{K}^{q} (\R^{N}) := \left \{ u: \R^{N} \rightarrow \R \, \mbox{ measurable and } \int_{\R^{N}} K(x) |u|^{q} \, dx < \infty \right \},
\end{equation*}
endowed with the norm
\begin{equation*}
\|u \|_{L_{K}^{q} (\R^{N})}:= \left( \int_{\R^{N}} K(x) |u|^{q} \, dx \right)^{1/q}.
\end{equation*}

\indent
Finally, we recall the following useful results, which extend the ones in \cite{AS1}.
\begin{lem}\label{cont}
Assume that $(V, K)\in \mathcal{K}$. Then $\X$ is continuously embedded in $L^{q}_{K}(\R^{N})$ for every $q\in [2, 2^{*}_{\alpha}]$ if $(h_3)$ holds. Moreover, $\X$ is continuously embedded in $L^{m}_{K}(\R^{N})$ if $(h_4)$ holds.
\end{lem}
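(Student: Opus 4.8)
The plan is to prove the continuous embedding $\X \hookrightarrow L^q_K(\R^N)$ by interpolating between two extreme cases and controlling the weight $K$ via the hypotheses $(h_1)$, $(h_3)$ or $(h_4)$, together with the Sobolev embedding $\mathcal{D}^{\alpha,2}(\R^N) \hookrightarrow L^{2^*_\alpha}(\R^N)$. First I would record the two ``endpoint'' estimates. For the exponent $q = 2^*_\alpha$: since $K \in L^\infty(\R^N)$ by $(h_1)$, we have
\[
\int_{\R^N} K(x) |u|^{2^*_\alpha}\,dx \le \|K\|_{L^\infty(\R^N)} \|u\|_{L^{2^*_\alpha}(\R^N)}^{2^*_\alpha} \le C\,[u]^{2^*_\alpha} \le C\,\|u\|^{2^*_\alpha},
\]
using the fractional Sobolev inequality. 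For the exponent $q = 2$: assuming $(h_3)$, i.e. $K/V \in L^\infty(\R^N)$, we get
\[
\int_{\R^N} K(x) |u|^2\,dx = \int_{\R^N} \frac{K(x)}{V(x)} V(x)|u|^2\,dx \le \left\|\frac{K}{V}\right\|_{L^\infty(\R^N)} \int_{\R^N} V(x)|u|^2\,dx \le C\,\|u\|^2.
\]

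Next, for a general $q \in (2, 2^*_\alpha)$ under $(h_3)$, I would write $q = 2\theta + 2^*_\alpha(1-\theta)$ for a suitable $\theta \in (0,1)$, split $K|u|^q = \big(K|u|^2\big)^{\theta}\big(K|u|^{2^*_\alpha}\big)^{1-\theta}$, and apply Hölder's inequality with exponents $1/\theta$ and $1/(1-\theta)$. The two factors are controlled by the endpoint estimates above, yielding
\[
\int_{\R^N} K(x)|u|^q\,dx \le \left(\int_{\R^N} K(x)|u|^2\,dx\right)^{\theta}\left(\int_{\R^N} K(x)|u|^{2^*_\alpha}\,dx\right)^{1-\theta} \le C\,\|u\|^q,
\]
which gives the claimed embedding for all $q \in [2, 2^*_\alpha]$ when $(h_3)$ holds.

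For the case $(h_4)$, the argument is a refinement of the above with $m$ playing the role of the intermediate exponent. Write $m = 2\theta_m + 2^*_\alpha(1-\theta_m)$, so that $\theta_m = \frac{2^*_\alpha - m}{2^*_\alpha - 2}$, and again split $K|u|^m = \big(K^{\theta_m}\big)\big(|u|^{2\theta_m}\big)\big(K^{1-\theta_m}|u|^{2^*_\alpha(1-\theta_m)}\big)$. The point is that $(h_4)$ says precisely $K/V^{\theta_m} \to 0$ at infinity; since $K$ is continuous and bounded near the origin region as well, $K/V^{\theta_m}$ is bounded on $\R^N$, hence $K^{\theta_m} \le C\,V^{\theta_m}$. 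Then
\[
\int_{\R^N} K(x)|u|^m\,dx \le C\int_{\R^N} \big(V(x)|u|^2\big)^{\theta_m}\big(K(x)|u|^{2^*_\alpha}\big)^{1-\theta_m}\,dx \le C\left(\int_{\R^N} V|u|^2\right)^{\theta_m}\left(\int_{\R^N} K|u|^{2^*_\alpha}\right)^{1-\theta_m} \le C\,\|u\|^m,
\]
by Hölder and the $L^\infty$ bound on $K$. This establishes $\X \hookrightarrow L^m_K(\R^N)$ under $(h_4)$.

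The main obstacle is not in the interpolation, which is routine, but in justifying that $K/V^{\theta_m}$ (resp.\ $K/V$) is globally bounded: $(h_4)$ only gives the decay at infinity, and one needs continuity of $V$ and $K$ together with positivity $(h_1)$ on compact sets to conclude boundedness on all of $\R^N$ — care is needed because $V$ itself vanishes at infinity, so the quotient could a priori blow up, and it is exactly the form of the exponent in $(h_4)$ that is calibrated to prevent this. (Hypothesis $(h_2)$ is not needed for this lemma; it enters later in the compactness of the embedding.)
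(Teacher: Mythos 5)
The paper does not actually prove this lemma --- it refers to \cite{AI}, which adapts the argument of Alves--Souto \cite{AS1} --- so your proposal has to stand on its own. The $(h_3)$ part does: the two endpoint bounds and the H\"older interpolation $K|u|^{q}=(K|u|^{2})^{\theta}(K|u|^{2^{*}_{\alpha}})^{1-\theta}$ are correct, and this is essentially the standard argument.

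The $(h_4)$ part contains a genuine error in the exponent bookkeeping. From $(h_4)$, continuity and the positivity in $(h_1)$ you correctly obtain the global pointwise bound $K\le C\,V^{\theta_m}$ with $\theta_m=\frac{2^{*}_{\alpha}-m}{2^{*}_{\alpha}-2}$; but the inequality you then invoke, $K^{\theta_m}\le C\,V^{\theta_m}$, is equivalent to $K\le C'V$, i.e.\ to $(h_3)$, and does \emph{not} follow from $(h_4)$. The paper's own example $K=H_3+(\log(2+|x|))^{-1}$, $V=H_3+(\log(2+|x|))^{-(2^{*}_{\alpha}-2)/(2^{*}_{\alpha}-m)}$ satisfies $(h_4)$ while $K/V$ is unbounded at infinity; covering such cases is precisely the point of having $(h_4)$ as a separate hypothesis. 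Consequently the first inequality in your final display, which requires $K\le C\,V^{\theta_m}K^{1-\theta_m}$ pointwise, fails. The repair is to spend the bound on the \emph{full} power of $K$ and pair the leftover factor with the unweighted Sobolev norm:
\[
K|u|^{m}\le C\,V^{\theta_m}|u|^{2\theta_m}\,|u|^{2^{*}_{\alpha}(1-\theta_m)}=C\,(V|u|^{2})^{\theta_m}\,(|u|^{2^{*}_{\alpha}})^{1-\theta_m},
\]
after which H\"older with exponents $1/\theta_m$ and $1/(1-\theta_m)$ together with $\mathcal{D}^{\alpha,2}(\R^{N})\hookrightarrow L^{2^{*}_{\alpha}}(\R^{N})$ give $\int_{\R^{N}}K|u|^{m}\,dx\le C\|u\|^{2\theta_m}[u]^{2^{*}_{\alpha}(1-\theta_m)}\le C\|u\|^{m}$. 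With that single correction the proof is complete; your remaining observations (boundedness of the quotient on all of $\R^{N}$ via continuity plus decay at infinity, and the fact that $(h_2)$ is only needed for compactness) are accurate.
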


\begin{lem}\label{prop2.2}
Assume that $(V, K)\in \K$. The following facts hold:
\begin{itemize}
\item [{$(1)$}] $\X$ is compactly embedded into $L_{K}^{q}(\R^{N})$ for all $q \in (2, 2^{*}_{\alpha})$ if $(h_3)$ holds$;$
\item [{$(2)$}] $\X$ is compactly embedded into $L_{K}^{m}(\R^{N})$ if $(h_4)$ holds.
\end{itemize}
\end{lem}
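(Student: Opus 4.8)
The plan is to adapt to the present nonlocal setting the scheme of Alves and Souto \cite{AS1}, the only structural change being the replacement of the classical Sobolev inequalities by their fractional counterparts, namely the continuous embedding $\mathcal{D}^{\alpha,2}(\R^{N}) \hookrightarrow L^{2^{*}_{\alpha}}(\R^{N})$ and the compact embedding $W^{\alpha,2}(\B_{r}(0)) \hookrightarrow\hookrightarrow L^{q}(\B_{r}(0))$ for $q \in [1, 2^{*}_{\alpha})$ (see \cite{DPV}). Since $\X$ is a Hilbert space, a bounded sequence $(u_{n}) \subset \X$ has a subsequence with $u_{n} \rightharpoonup u$ in $\X$; putting $v_{n} := u_{n} - u$ and recalling that by Lemma \ref{cont} each $v_{n}$ belongs to $L^{q}_{K}(\R^{N})$, it is enough to prove that $\int_{\R^{N}} K(x)|v_{n}|^{q}\,dx \to 0$, where $q \in (2, 2^{*}_{\alpha})$ in case $(1)$ and $q = m$ in case $(2)$. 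The heart of the matter is the decomposition
\begin{equation*}
\int_{\R^{N}} K|v_{n}|^{q}\,dx = \int_{\{|v_{n}| \le t\}} K|v_{n}|^{q}\,dx + \int_{\{|v_{n}| > t\} \cap \B_{r}(0)} K|v_{n}|^{q}\,dx + \int_{\{|v_{n}| > t\} \cap \B_{r}^{c}(0)} K|v_{n}|^{q}\,dx ,
\end{equation*}
the three parameters being chosen in the order: first $t>0$ small, then $r>0$ large, then $n$ large.

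First I would record the local compactness. Since $V$ is continuous and positive, it is bounded below by a positive constant on the compact set $\overline{\B_{r}(0)}$, so the bound on $(v_{n})$ in $\X$ yields a bound in $W^{\alpha,2}(\B_{r}(0))$; by the compact fractional Sobolev embedding, a diagonal extraction over $r \in \N$, and uniqueness of weak limits (the $L^{q}_{loc}$-limit must agree with the weak $\X$-limit, which is $0$), we may assume $v_{n} \to 0$ in $L^{q}(\B_{r}(0))$ for every $r$ and a.e. in $\R^{N}$. As $K \in L^{\infty}(\R^{N})$, this disposes of the local term: for each fixed $r$, $\int_{\B_{r}(0)} K|v_{n}|^{q}\,dx \le \|K\|_{\infty}\int_{\B_{r}(0)} |v_{n}|^{q}\,dx \to 0$. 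For the sublevel term under $(h_{3})$ one uses $|v_{n}|^{q} \le t^{q-2}|v_{n}|^{2}$ on $\{|v_{n}| \le t\}$ together with $K \le \|K/V\|_{\infty}\,V$, obtaining $\int_{\{|v_{n}| \le t\}} K|v_{n}|^{q}\,dx \le t^{q-2}\|K/V\|_{\infty}\sup_{n}\int_{\R^{N}} V|v_{n}|^{2}\,dx$, which falls below any prescribed $\varepsilon$ once $t$ is small. Under $(h_{4})$ the scheme collapses to a two-region splitting $\B_{r}(0) \cup \B_{r}^{c}(0)$: writing $m = 2\gamma + 2^{*}_{\alpha}(1-\gamma)$ with $\gamma := \frac{2^{*}_{\alpha}-m}{2^{*}_{\alpha}-2} \in (0,1)$, choosing $r$ so large that $K(x) \le \varepsilon V(x)^{\gamma}$ for $|x|>r$ (allowed by $(h_{4})$), and using Hölder with exponents $1/\gamma$ and $1/(1-\gamma)$,
\begin{equation*}
\int_{\B_{r}^{c}(0)} K|v_{n}|^{m}\,dx \le \varepsilon \int_{\B_{r}^{c}(0)} \big(V|v_{n}|^{2}\big)^{\gamma}\big(|v_{n}|^{2^{*}_{\alpha}}\big)^{1-\gamma}\,dx \le \varepsilon\Big(\sup_{n}\|v_{n}\|^{2}\Big)^{\gamma}\Big(\sup_{n}\|v_{n}\|_{L^{2^{*}_{\alpha}}(\R^{N})}^{2^{*}_{\alpha}}\Big)^{1-\gamma} ,
\end{equation*}
which is $\lesssim \varepsilon$ uniformly in $n$; together with the local term this already yields $(2)$.

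It remains, in case $(1)$, to treat the far-field superlevel term. Once $t$ is frozen, Chebyshev's inequality and $\mathcal{D}^{\alpha,2}(\R^{N}) \hookrightarrow L^{2^{*}_{\alpha}}(\R^{N})$ give $m(\{|v_{n}| > t\}) \le t^{-2^{*}_{\alpha}}\sup_{n}\|v_{n}\|_{L^{2^{*}_{\alpha}}(\R^{N})}^{2^{*}_{\alpha}} =: R_{0} < \infty$, \emph{independently of $n$}. Hölder with exponents $\frac{2^{*}_{\alpha}}{2^{*}_{\alpha}-q}$ and $\frac{2^{*}_{\alpha}}{q}$ then gives
\begin{equation*}
\int_{\{|v_{n}| > t\} \cap \B_{r}^{c}(0)} K|v_{n}|^{q}\,dx \le \Big(\int_{\{|v_{n}| > t\} \cap \B_{r}^{c}(0)} K\,dx\Big)^{1-\frac{q}{2^{*}_{\alpha}}}\Big(\|K\|_{\infty}\sup_{n}\|v_{n}\|_{L^{2^{*}_{\alpha}}(\R^{N})}^{2^{*}_{\alpha}}\Big)^{\frac{q}{2^{*}_{\alpha}}} ,
\end{equation*}
and since the Borel sets $A_{n} := \{|v_{n}| > t\}$ all satisfy $m(A_{n}) \le R_{0}$, hypothesis $(h_{2})$ furnishes $r$ so large that $\int_{A_{n} \cap \B_{r}^{c}(0)} K\,dx < \varepsilon$ \emph{uniformly in $n$}, whence this term is $\lesssim \varepsilon$ as well. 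Adding the three estimates, letting $n \to \infty$ and then $\varepsilon \to 0$, we get $v_{n} \to 0$ in $L^{q}_{K}(\R^{N})$, i.e. $u_{n} \to u$, which proves $(1)$.

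The only genuinely delicate point is the bookkeeping of the parameters together with the uniformity in $n$: the local compactness supplies smallness only \emph{after} $r$ has been fixed, $(h_{2})$ supplies smallness of $\int_{A_{n} \cap \B_{r}^{c}(0)} K$ only for families of uniformly bounded measure (which forces the threshold $t$ to be chosen first), and under $(h_{3})$ the sublevel estimate forces $t$ to be small first as well; hence the quantifiers must be organized in the order $\varepsilon$, then $t$, then $r$, then $n$, with every constant depending only on $\sup_{n}\|u_{n}\|$, $\|K\|_{\infty}$ and $\|K/V\|_{\infty}$ (in case $(2)$, on nothing beyond the Sobolev constant). Apart from this, and from the routine check at the very start that a bounded sequence in $\X$ is bounded in $W^{\alpha,2}(\B_{r}(0))$ — where the positivity and continuity of $V$ enter — the argument is entirely standard, and the passage from the classical to the fractional setting consists only in substituting the corresponding Sobolev embeddings.
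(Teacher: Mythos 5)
Your proof is correct, and it follows essentially the same route as the argument the paper relies on: the paper itself gives no proof of this lemma but refers to \cite{AI}, which in turn adapts the Alves--Souto scheme of \cite{AS1} --- precisely the decomposition into sublevel, local, and far-field superlevel regions, with $(h_2)$ applied to the uniformly bounded-measure sets $\{|v_n|>t\}$ and the interpolation $m=2\gamma+2^{*}_{\alpha}(1-\gamma)$ under $(h_4)$ --- with the classical Sobolev embeddings replaced by their fractional counterparts. Your bookkeeping of the quantifiers ($\varepsilon$, then $t$, then $r$, then $n$) and the uniformity in $n$ is exactly the point that makes the cited argument work, so nothing is missing.
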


\begin{lem}\label{lem2.1}
Assume that $(V, K)\in \K$ and $f$ satisfies either $(f_1)-(f_2)$ or $(\tilde{f}_1)-(f_2)$. Let $\{u_{n}\}_{n\in \N}$ be a sequence such that $u_{n}\rightharpoonup u$ in $\X$. Then, up to a subsequence, one has
\begin{equation*}
\lim_{n\rightarrow \infty} \int_{\R^{N}} K(x) F(u_{n}) \, dx = \int_{\R^{N}} K(x) F(u) \, dx
\end{equation*}
and
\begin{equation*}
\lim_{n\rightarrow \infty} \int_{\R^{N}} K(x) f(u_{n}) u_{n} \, dx = \int_{\R^{N}} K(x) f(u) u \, dx.
\end{equation*}
\end{lem}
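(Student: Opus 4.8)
\emph{Plan of proof.} The strategy is to combine the compactness provided by Lemma~\ref{prop2.2} with the behaviour of $f$ near the origin and at infinity, and then to pass to the limit by Vitali's convergence theorem; a direct use of the dominated convergence theorem is not available, because a pointwise majorant of $\{u_{n}\}$ will only lie in a single weighted Lebesgue space, whereas the growth of $F$ brings in the exponents $2$ and $2^{*}_{\alpha}$.

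First I would fix the ``good'' exponent: put $r:=q$ for some $q\in(2,2^{*}_{\alpha})$ if $(h_3)$ holds, and $r:=m$ if $(h_4)$ holds. By Lemma~\ref{prop2.2}, after extracting a subsequence, $u_{n}\to u$ strongly in $L^{r}_{K}(\R^{N})$, and along a further subsequence $u_{n}\to u$ a.e.\ in $\R^{N}$; in particular $K|u_{n}|^{r}\to K|u|^{r}$ in $L^{1}(\R^{N})$, so the family $\{K|u_{n}|^{r}\}_{n}$ is equi-integrable and tight. Since $\{u_{n}\}$ is weakly convergent it is bounded in $\X$; hence, using $K\in L^{\infty}(\R^{N})$ (from $(h_1)$) together with the embedding $\X\hookrightarrow \mathcal{D}^{\alpha,2}(\R^{N})\hookrightarrow L^{2^{*}_{\alpha}}(\R^{N})$, the quantities $\int_{\R^{N}}K|u_{n}|^{2^{*}_{\alpha}}\,dx$ are uniformly bounded in $n$, and when $(h_3)$ holds also $\int_{\R^{N}}K|u_{n}|^{2}\,dx\le\|K/V\|_{L^{\infty}}\,\|u_{n}\|^{2}$ is uniformly bounded.

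Next I would record the growth estimates coming from $(f_1)$--$(f_2)$, respectively $(\tilde f_1)$--$(f_2)$: for every $\varepsilon>0$ there is $C_{\varepsilon}>0$ such that, for all $t\in\R$,
\begin{align*}
|F(t)|+|f(t)\,t|&\le \varepsilon\big(|t|^{2}+|t|^{2^{*}_{\alpha}}\big)+C_{\varepsilon}|t|^{r}&&\text{if }(h_3)\text{ holds},\\
|F(t)|+|f(t)\,t|&\le C_{\varepsilon}|t|^{m}+\varepsilon|t|^{2^{*}_{\alpha}}&&\text{if }(h_4)\text{ holds}.
\end{align*}
Fixing $\eta>0$, I would first choose $\varepsilon$ so small that, after integration, the $\varepsilon$-term is $<\eta$ uniformly in $n$ (this uses the uniform bounds of the previous step, which do \emph{not} depend on $\varepsilon$); then, with $\varepsilon$ and hence $C_{\varepsilon}$ frozen, the equi-integrability and tightness of $\{K|u_{n}|^{r}\}$ (resp.\ $\{K|u_{n}|^{m}\}$) control the remaining term on sets of small measure and on $\B_{R}^{c}(0)$ for $R$ large, uniformly in $n$; the same facts hold trivially for the single function $KF(u)\in L^{1}(\R^{N})$. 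This shows that $\{KF(u_{n})\}_{n}$ is equi-integrable and tight on $\R^{N}$, and since $KF(u_{n})\to KF(u)$ a.e.\ by continuity of $F$, Vitali's convergence theorem yields $\int_{\R^{N}}KF(u_{n})\,dx\to\int_{\R^{N}}KF(u)\,dx$ along the chosen subsequence. Running the identical argument with $F$ replaced by $t\mapsto f(t)\,t$ gives the second limit.

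The main obstacle, and the place where hypothesis $(h_2)$ is essential, is precisely this mismatch of exponents: the ``subcritical'' part of $F$, of type $|t|^{r}$, is handled by genuine equi-integrability, which is exactly what the compact embedding of Lemma~\ref{prop2.2} (built on $(h_2)$) provides, whereas the pieces involving $|t|^{2}$ and $|t|^{2^{*}_{\alpha}}$ are only uniformly bounded in $L^{1}$ (through $K/V\in L^{\infty}$ and the Sobolev inequality) and have to be absorbed into the smallness parameter $\varepsilon$ before Vitali's theorem applies. Keeping track of which bound is available in case $(h_3)$ versus case $(h_4)$ — in particular the absence of an $L^{2}_{K}$ bound under $(h_4)$, which is compensated by the stronger assumption $(\tilde f_1)$ — is the part that requires care.
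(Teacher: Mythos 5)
Your argument is correct. Note first that the paper does not actually prove Lemma~\ref{lem2.1}: it defers to \cite{AI}, where the proof follows the Alves--Souto scheme of \cite{AS1}. That scheme uses hypothesis $(h_2)$ directly: one isolates the sets $A_n=\{x:\, t_0\le |u_n(x)|\le t_1\}$ on which $f(u_n)u_n$ is merely bounded, observes that $m(A_n)$ is uniformly bounded because $\int_{\R^N}|u_n|^{2^{*}_{\alpha}}\,dx$ is, invokes $(h_2)$ to make $\int_{A_n\cap \B_r^c(0)}K\,dx$ small uniformly in $n$, absorbs the regions $\{|u_n|<t_0\}$ and $\{|u_n|>t_1\}$ into $\varepsilon\left(\int K|u_n|^2\,dx+\int K|u_n|^{2^{*}_{\alpha}}\,dx\right)$, and treats the ball $\B_r(0)$ by local compactness. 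You instead route the entire use of $(h_2)$ through the compact embedding of Lemma~\ref{prop2.2}: strong convergence in $L^{r}_{K}(\R^N)$ yields equi-integrability and tightness of $\{K|u_n|^r\}$, the three-term growth estimate $|F(t)|+|f(t)t|\le \varepsilon(|t|^{2}+|t|^{2^{*}_{\alpha}})+C_{\varepsilon}|t|^{r}$ confines the non-compact exponents to a prefactor $\varepsilon$ multiplying quantities that are only uniformly bounded in $n$, and Vitali's theorem concludes. This is a genuinely different, and arguably cleaner, route once Lemma~\ref{prop2.2} is available; you handle the order of quantifiers correctly (first $\varepsilon$ so that $\varepsilon\,\sup_n\int K(|u_n|^{2}+|u_n|^{2^{*}_{\alpha}})\,dx$ is small, then $\delta$ and $R$ for the frozen $C_{\varepsilon}$), and you rightly observe that under $(h_4)$ the $|t|^{2}$ term must be avoided, which $(\tilde f_1)$ permits. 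The only step worth making explicit is the a.e.\ convergence $u_n\to u$ along a subsequence: it follows from the $L^{r}_{K}$ convergence precisely because $K(x)>0$ for all $x$ by $(h_1)$.
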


For a proof of these lemmas one can see \cite{AI}.

\section{Existence of a least energy nodal solution}\label{ene}

In this section we obtain some preliminary results which are useful to overcome the lack of differentiability of  the Nehari manifold in which we look for weak solutions to problem (\ref{P}).

\indent
In the following we search a nodal or sign-changing weak solution of problem (\ref{P}), that is a function $u=u^{+}+u^{-}\in \X$ such that $u^{+}:=\max \{u, 0\}\neq 0$, $u^{-}:= \min \{u, 0\}\neq 0$ in $\R^{N}$ and
\begin{equation*}
\iint_{\R^{2N}} \frac{(u(x)- u(y)) (\varphi(x)-\varphi(y))}{|x-y|^{N+2\alpha}} \, dxdy + \int_{\R^{N}} V(x) u(x) \varphi(x) \, dx
\end{equation*}
$$
= \int_{\R^{N}} K(x) f(u) \varphi(x) \, dx,
$$
for every $\varphi \in \X$. \par

The energy functional associated to problem (\ref{P}) is given by
\begin{equation*}
J(u):= \frac{1}{2}\|u\|^{2} - \int_{\R^{N}} K(x) F(u) \, dx,
\end{equation*}
where $\displaystyle{F(t):= \int_{0}^{t} f(\tau) d\tau}$.
By the assumptions on $f$, it is clear that $J\in C^{1}(\X, \R)$ and that its differential $J': \X \rightarrow \X'$ is given by
\begin{equation*}
\langle J'(u), \varphi \rangle = \iint_{\R^{2N}} \frac{(u(x)- u(y)) (\varphi(x)-\varphi(y))}{|x-y|^{N+2\alpha}} \, dx + \int_{\R^{N}} V(x) u(x) \varphi(x) \, dx
\end{equation*}
$$
- \int_{\R^{N}} K(x) f(u) \varphi(x) \, dx,
$$
for every $u, \varphi \in \X$.
Then, the critical points of $J$ are the weak solutions of problem (\ref{P}).\\
\indent Let us also observe that $J$ satisfies the following decompositions
\begin{align*}
J(u)=J(u^{+})+J(u^{-}) -\iint_{\R^{2N}} \frac{u^{+}(x)u^{-}(y) + u^{-}(x) u^{+}(y)}{|x-y|^{N+2\alpha}} \, dxdy,
\end{align*}
and
\begin{align*}
\langle J'(u), u^{+}\rangle =\langle J'(u^{+}), u^{+}\rangle -\iint_{\R^{2N}} \frac{u^{+}(x)u^{-}(y) + u^{-}(x) u^{+}(y)}{|x-y|^{N+2\alpha}} \, dxdy.
\end{align*}

\indent
The Nehari manifold associated to the functional $J$ is given by
\begin{equation*}
\mathcal{N}:= \{ u\in \X \setminus \{0\} : \langle J'(u), u \rangle =0\}.
\end{equation*}
Recalling that a nonzero critical point $u$ of $J$ is a least energy weak solution of problem (\ref{P}) if
$$
J(u)=\min_{v\in \mathcal{N}} J(v)
$$
and, since our purpose is to prove the existence of a least energy sign-changing weak solution of (\ref{P}), we look for $u\in \mathcal{M}$ such that
$$
J(u)=\min_{v\in \mathcal{M}} J(v)
$$
where $\mathcal{M}$ is the subset of $\mathcal{N}$ containing all sign-changing weak solutions of problem (\ref{P}), that is
\begin{equation*}
\mathcal{M}:= \{ w\in \mathcal{N} : w^{+}\neq 0, w^{-}\neq 0,  \langle J'(w), w^{+} \rangle = \langle J'(w), w^{-} \rangle=0\}.
\end{equation*}

Once $f$ is only continuous, the following results are crucial, since they allow us to overcome the non-differentiability of $\mathcal{N}$. Below, we denote by $\mathbb{S}$  the unit sphere on $\X$.

\begin{lem}\label{lemz1}
Suppose that $(V, K)\in \mathcal{K}$ and $f$ verifies conditions $(f_1)-(f_4)$. Then, the following facts hold true:
\begin{compactenum}
\item[$(a)$] For each $u\in \X\setminus\{0\}$, let $h_{u}: \R_{+} \rightarrow \R$ be defined by $h_{u}(t):= J(tu)$. Then, there is a unique $t_{u}>0$ such that
\begin{align*}
&h_{u}'(t)>0 \, \mbox{ in } (0, t_{u})\\
&h_{u}'(t)<0 \, \mbox{ in } (t_{u}, +\infty);
\end{align*}
\item[$(b)$] There is $\tau>0$, independent of $u$, such that $t_{u}\geq \tau$ for every $u\in \mathbb{S}$. Moreover, for each compact set $\mathcal{W}\subset \mathbb{S}$, there is $C_{\mathcal{W}}>0$ such that $t_{u}\leq C_{\mathcal{W}}$ for every $u\in \mathcal{W}$$;$
\item[$(c)$] The map $\hat{\eta}: \X\setminus \{0\}\rightarrow \mathcal{N}$ given by $\hat{\eta}(u):=t_{u}u$ is continuous and $\eta:= \hat{\eta}|_{\mathbb{S}}$ is a homeomorphism between $\mathbb{S}$ and $\mathcal{N}$. Moreover,
    $$\eta^{-1}(u)= \frac{u}{\|u\|}.$$
\end{compactenum}
\end{lem}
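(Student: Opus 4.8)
The plan is to verify the three claims in the standard Nehari-manifold fibering style, being careful that $f$ is only continuous (so we cannot differentiate the constraint, but we can still differentiate the one-variable function $h_u$). Fix $u\in\X\setminus\{0\}$ and write $h_u(t)=J(tu)=\frac{t^2}{2}\|u\|^2-\int_{\R^N}K(x)F(tu)\,dx$. By $(f_1)$ or $(\tilde f_1)$ together with $(f_2)$ and the embeddings in Lemma \ref{cont}, for any $\varepsilon>0$ one has $|F(s)|\le \varepsilon|s|^2+C_\varepsilon|s|^{p}$ (with $p\in(2,2^*_\alpha)$ in the $(h_3)$ case, and $p=m$ controlled via $L^m_K$ in the $(h_4)$ case), so $h_u(t)\ge \frac{t^2}{2}\|u\|^2-\varepsilon t^2\|u\|^2_{L^2_K}-C_\varepsilon t^{p}\|u\|^p_{L^p_K}$; choosing $\varepsilon$ small shows $h_u(t)>0$ for $t$ small. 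On the other hand, $(f_3)$ gives $F(s)\ge M|s|^2$ for $|s|$ large with $M$ arbitrarily large, hence $h_u(t)\le \frac{t^2}{2}\|u\|^2-M t^2\int_{\{|u|\ge\delta\}}K(x)|u|^2\,dx+O(1)\to-\infty$ as $t\to+\infty$. Thus $h_u$ has a positive maximum. For uniqueness, compute $h_u'(t)=t\|u\|^2-\int_{\R^N}K(x)f(tu)u\,dx$, so that $h_u'(t)=0$ is equivalent to $\|u\|^2=\int_{\R^N}K(x)\frac{f(tu)}{tu}u^2\,dx$ (integrand read as $0$ where $u=0$); by $(f_4)$ the right-hand side is a strictly increasing function of $t>0$ on $\{u\ne0\}$, so it crosses the constant $\|u\|^2$ at exactly one point $t_u$. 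This simultaneously gives $h_u'>0$ on $(0,t_u)$ and $h_u'<0$ on $(t_u,+\infty)$, proving $(a)$.

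For $(b)$, note $t_uu\in\mathcal{N}$ means $t_u^2\|u\|^2=\int_{\R^N}K(x)f(t_uu)(t_uu)\,dx$. Using the growth bound $|f(s)s|\le \varepsilon|s|^2+C_\varepsilon|s|^{p}$ and the continuous embeddings, $t_u^2\|u\|^2\le \varepsilon t_u^2 C\|u\|^2+C_\varepsilon t_u^{p}C'\|u\|^{p}$; for $u\in\mathbb{S}$, after absorbing the $\varepsilon$-term, this yields $t_u^{p-2}\ge c>0$, i.e. a uniform lower bound $\tau$. For the upper bound on a compact $\mathcal{W}\subset\mathbb{S}$: if $t_{u_n}\to+\infty$ along $u_n\to u$ in $\mathbb{S}$, then from $h_{u_n}(t_{u_n})\ge h_{u_n}(Rt_{u_n}/\,t_{u_n})$... more directly, $J(t_{u_n}u_n)=h_{u_n}(t_{u_n})\ge h_{u_n}(s)$ for every $s>0$; fixing $s$ large and using $u_n\to u$ and Lemma \ref{lem2.1} (continuity of $\int K F$) we would get $\liminf J(t_{u_n}u_n)\ge J(su)$ for all $s$, while the argument of $(a)$ applied along the sequence (using $(f_3)$ and that $u_n\to u\ne0$, so the sets $\{|u_n|\ge\delta\}$ have uniformly positive $K$-measure) forces $J(t_{u_n}u_n)\to-\infty$, a contradiction. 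Hence $t_u\le C_{\mathcal W}$.

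For $(c)$, continuity of $\hat\eta$ is the crux and I expect it to be the main obstacle, precisely because $\mathcal{N}$ is not known to be a $C^1$ manifold, so the implicit function theorem is unavailable. I would argue by sequences: take $u_n\to u$ in $\X\setminus\{0\}$; by homogeneity assume $u_n,u\in\mathbb{S}$; by $(b)$ the sequence $t_{u_n}$ lies in a compact interval $[\tau,C]$, so along any subsequence $t_{u_n}\to t_*>0$. Passing to the limit in the identity $t_{u_n}\|u_n\|^2=\int_{\R^N}K(x)f(t_{u_n}u_n)u_n\,dx$, using $u_n\to u$ strongly in $\X$ and hence in $L^p_K$ (resp. $L^m_K$) together with the continuity properties from Lemma \ref{lem2.1} (applied to the rescaled sequence $t_{u_n}u_n\to t_*u$), we obtain $t_*\|u\|^2=\int_{\R^N}K(x)f(t_*u)u\,dx$, i.e. $t_*u\in\mathcal{N}$; by the uniqueness in $(a)$, $t_*=t_u$. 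Since every subsequence has a further subsequence along which $t_{u_n}\to t_u$, the full sequence converges and $\hat\eta$ is continuous. Finally, $\eta:=\hat\eta|_{\mathbb{S}}$ is a continuous bijection onto $\mathcal{N}$ (surjectivity and injectivity are immediate from $(a)$: every $w\in\mathcal{N}$ equals $t_{w/\|w\|}\,(w/\|w\|)$ with the fiber parameter forced to be $\|w\|$), with explicit inverse $\eta^{-1}(w)=w/\|w\|$, which is manifestly continuous; hence $\eta$ is a homeomorphism.
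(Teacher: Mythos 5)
Your argument is correct, and for parts $(a)$ and $(b)$ it runs along essentially the same lines as the paper's: positivity of $h_{u}$ near the origin from $(f_1)/(\tilde{f}_1)$ and $(f_2)$, divergence to $-\infty$ from $(f_3)$ via Fatou, and uniqueness from $(f_4)$. Two differences of substance are worth recording. For uniqueness you observe that $t\mapsto\int_{\R^{N}} K(x)\frac{f(tu)}{tu}u^{2}\,dx$ is strictly increasing (which does follow from $(f_4)$ on both sets $\{u>0\}$ and $\{u<0\}$), whereas the paper subtracts the two critical-point identities; these are equivalent, but be aware that monotonicity only yields \emph{at most} one crossing of the level $\|u\|^{2}$ --- the existence of a crossing comes from the interior maximum you produced first, so both halves of your argument are genuinely needed. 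For the upper bound in $(b)$ the paper reaches its contradiction from the inequality $J\geq 0$ on $\mathcal{N}$ (a consequence of Remark \ref{rem1}), while you use the global-maximum property $J(t_{u_{n}}u_{n})\geq J(su_{n})$ together with $J(t_{u_{n}}u_{n})\to-\infty$; both versions rest on the same mildly informal step of applying the fixed-$u$ blow-down estimate uniformly along $u_{n}\to u$, which is justified by a.e.\ convergence and Fatou applied to $J(t_{n}u_{n})/t_{n}^{2}$, so you are no less rigorous than the source. The real divergence is part $(c)$: the paper simply invokes Proposition 8 of Szulkin and Weth \cite{SW}, while you reprove it by the compactness-plus-uniqueness subsequence argument ($t_{u_{n}}\in[\tau,C_{\mathcal{W}}]$ by $(b)$, pass to the limit in the Nehari identity using Lemma \ref{lem2.1} applied to $t_{u_{n}}u_{n}\to t_{*}u$, identify $t_{*}=t_{u}$ by the uniqueness in $(a)$, conclude for the full sequence). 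This is precisely the content of the cited abstract result, so your route buys nothing new mathematically, but it is self-contained where the paper's is not and makes the dependence on $(a)$, $(b)$ and the embedding lemmas explicit.
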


\begin{proof}
$(a)$ We distinguish two cases. \par
\indent Let us assume that $(h_3)$ is verified. By using assumptions $(f_1)$ and $(f_2)$, given $\varepsilon>0$ there exists a positive constant $C_{\varepsilon}$ such that
\begin{equation*}
|F(t)| \leq \varepsilon t^{2} + C_{\varepsilon} |t|^{2^{*}_{\alpha}},\quad\mbox{ for every } t\in\R.
\end{equation*}
The above inequality and the Sobolev embedding yield
\begin{align}\begin{split}\label{z1}
J(tu)&= \frac{t^{2}}{2}[u]^{2} + \frac{t^{2}}{2}\int_{\R^{N}} V(x) |u|^{2} \, dx - \int_{\R^{N}} K(x) F(tu) \, dx \\
&\geq \frac{t^{2}}{2} \|u\|^{2} - \varepsilon \int_{\R^{N}} K(x) t^{2} u^{2}\, dx - C_{\varepsilon} \int_{\R^{N}} K(x) t^{2^{*}_{\alpha}} |u|^{2^{*}_{\alpha}}\, dx \\
&\geq \frac{t^{2}}{2} \|u\|^{2} - \varepsilon \left\|{K}/{V}\right\|_{L^{\infty}(\R^{N})} t^{2} \|u\|^{2} - C_{\varepsilon}C' \|K\|_{L^{\infty}(\R^{N})} t^{2^{*}_{\alpha}} \|u\|^{2^{*}_{\alpha}}.
\end{split}\end{align}

\noindent Taking $\displaystyle{0<\varepsilon< \frac{1}{2 \displaystyle\left\|{K}/{V}\right\|_{L^{\infty}(\R^{N})}}}$ we get $t_{0}>0$ sufficiently small such that
\begin{equation}\label{z2}
0<h_{u}(t)= J(tu), \, \mbox{ for all } t<t_{0}.
\end{equation}
\indent On the other hand, suppose that $(h_4)$ is true. Then, there exists a constant $C_{m}>0$ such that, for each $\varepsilon \in (0, C_{m})$, we obtain $R>0$ such that
\begin{equation}\label{z3}
\int_{\B_{R}^{c}(0)} K(x) |u|^{m} \, dx \leq \varepsilon \int_{\B_{R}^{c}(0)} (V(x) |u|^{2} + |u|^{2^{*}_{\alpha}}) \, dx,
\end{equation}
for every $u\in \X$.\par
\indent Now, by using $(\tilde{f}_1)$ and $(f_2)$,
the Sobolev embedding result, relation (\ref{z3}) and H\"older inequality, we have that
\begin{align}\label{z11}
J(tu) &\geq \frac{t^{2}}{2}\|u\|^{2} - C_{1} \int_{\R^{N}} K(x) t^{m} |u|^{m} \, dx -C_{2} \int_{\R^{N}} K(x) t^{2^{*}_{\alpha}} |u|^{2^{*}_{\alpha}} \, dx \nonumber \\
&\geq \frac{t^{2}}{2}\|u\|^{2} - C_{1} t^{m} \varepsilon \int_{\B_{R}^{c}(0)} (V(x) |u|^{2} + |u|^{2^{*}_{\alpha}}) \, dx - C_{1} t^{m} \int_{\B_{R}(0)} K(x) |u|^{m}\, dx \nonumber \\
&\,\,\,\,\,- C_{2} t^{2^{*}_{\alpha}} \|K\|_{L^{\infty}(\R^{N})} \int_{\R^{N}} |u|^{2^{*}_{\alpha}} \, dx \nonumber \\
&\geq \frac{t^{2}}{2}\|u\|^{2} - C_{1} t^{m} \varepsilon \int_{\B_{R}^{c}(0)} (V(x) |u|^{2} + |u|^{2^{*}_{\alpha}}) \, dx - C_{1} t^{m} \|K\|_{L^{\frac{2^{*}_{\alpha}}{2^{*}_{\alpha}-m}}(\B_{R}(0))} \left( \int_{\B_{R}(0)} K(x) |u|^{m}\, dx \right)^{\frac{m}{2^{*}_{\alpha}}} \nonumber\\
&\,\,\,\,\,- C_{2} t^{2^{*}_{\alpha}} \|K\|_{L^{\infty}(\R^{N})} \int_{\R^{N}} |u|^{2^{*}_{\alpha}} \, dx \nonumber \\
&\geq \frac{t^{2}}{2}\|u\|^{2} - C_{1} t^{m} \left( \varepsilon \|u\|^{2} + \varepsilon C \|u\|^{2^{*}_{\alpha}} + C \|K\|_{L^{\frac{2^{*}_{\alpha}}{2^{*}_{\alpha}-m}}(\B_{R}(0))} \|u\|^{m} \right) - C_{2} C t^{2^{*}_{\alpha}} \|K\|_{L^{\infty}(\R^{N})} \|u\|^{2^{*}_{\alpha}}.
\end{align}
This shows that condition (\ref{z2}) is verified also in this case. \par
\indent Moreover, since $F(t)\geq 0$ for every $t\in \R$, we have
\begin{equation*}
J(tu)\leq \frac{t^{2}}{2} \|u\|^{2} - \int_{A} K(x) F(tu)\, dx,
\end{equation*}
where $A\subset \supp u$ is a measurable set with finite and positive measure. Hence,
\begin{equation*}
\limsup_{t\rightarrow +\infty} \frac{J(tu)}{\|tu\|^{2}} \leq \frac{1}{2} -\liminf_{t\rightarrow \infty} \left \{\int_{A} K(x)\left[ \frac{F(tu)}{(tu)^{2}} \right] \left(\frac{u}{\|u\|}\right)^{2} \, dx\right\}.
\end{equation*}
\indent By $(f_3)$ and Fatou's lemma it follows that
\begin{equation}\label{z4}
\limsup_{t\rightarrow +\infty} \frac{J(tu)}{\|tu\|^{2}}\leq -\infty.
\end{equation}
\noindent Thus, there exists $R>0$ sufficiently large such that
\begin{equation}\label{z5}
h_{u}(R)= J(Ru)<0.
\end{equation}
By the continuity of $h_{u}$ and $(f_4)$ there is $t_{u}>0$ which is a global maximum of $h_{u}$ with $t_{u}u \in \mathcal{N}$. \par
\indent Now, we aim to prove that $t_{u}$ is the unique critical point of $h_u$. Arguing by contradiction, let us assume that there are $t_1, t_2$ critical points of $h_u$ with $t_1>t_2>0$. Thus, we have $h_{u}'(t_{1})=h_{u}'(t_{2})=0$, or equivalently
\begin{align*}
&\|u\|^{2}- \int_{\R^{N}} K(x) \frac{f(t_{1}u)u}{t_{1}}\, dx=0\\
&\|u\|^{2}- \int_{\R^{N}} K(x) \frac{f(t_{2}u)u}{t_{2}}\, dx=0.
\end{align*}
\indent Subtracting, and taking into account $(f_{4})$, we obtain
\begin{equation*}
0 = \int_{\R^{N}} K(x) \left[ \frac{f(t_{1}u)}{t_{1}u} - \frac{f(t_{2}u)}{t_{2}u} \right] u^{2} \, dx>0
\end{equation*}
which leads a contradiction.
\smallskip

\noindent
$(b)$ By $(a)$ there exists $t_{u}>0$ such that
\begin{equation}\label{t_u}
t_{u}^{2} \|u\|^{2} = \int_{\R^{N}} K(x) f(t_{u}u) t_{u}u\, dx.
\end{equation}
\indent Then, estimating the right-hand side of (\ref{t_u}) similarly to (\ref{z1}) and (\ref{z11}), we obtain that there exists $\tau>0$, independent of $u$, such that $t_{u}\geq \tau$. 
On the other hand, let $\mathcal{W}\subset \mathbb{S}$ be a compact set. Assume by contradiction that there exists $\{u_{n}\}_{n\in \N}\subset \mathcal{W}$ such that $t_{n}:=t_{u_{n}}\rightarrow \infty$. Therefore, there exists $u\in \mathcal{W}$ such that $u_{n}\rightarrow u$ in $\X$. From (\ref{z4}), we have
\begin{equation}\label{z6}
J(t_{n}u_{n})\rightarrow -\infty \, \mbox{ in } \R.
\end{equation}
\indent Therefore, by using Remark \ref{rem1}, one has
\begin{align}\begin{split}\label{z7}
J(v)&= J(v) - \frac{1}{2} \langle J'(v), v\rangle \\
&= \int_{\R^{N}} K(x) \left[ \frac{1}{2} f(v)v - F(v) \right] \, dx \geq 0,
\end{split}\end{align}
for each $v\in \mathcal{N}$.\par
By taking into account that $\{t_{u_{n}}u_{n}\}_{n\in\N}\subset \mathcal{N}$, we conclude from (\ref{z6}) that (\ref{z7}) is not true, which is a contradiction.
\smallskip

\noindent
$(c)$ Since $J\in C^{1}(\X, \R)$, $J(0)=0$ and since it satisfies $(a)$ and $(b)$, the thesis follows by [\cite{SW}, Proposition 8]. The proof is now complete.
\end{proof}

Let us define the maps
\begin{equation*}
\hat{\psi}: \X \rightarrow \R \, \mbox{ and } \, \psi: \mathbb{S}\rightarrow \R,
\end{equation*}
by $\hat{\psi}(u):= J(\hat{\eta}(u))$ and $\psi:=\hat{\psi}|_{\mathbb{S}}$. \par

The compactness condition assumed in the sequel is the well-known {Palais-Smale condition at level $d$}, (briefly ${(\rm PS)}_{d}$) which in our framework reads as follows (see, for instance, \cite{Rab, Willem}):
\begin{center}
$J$ satisfies the {\em Palais-Smale compactness condition} at level $d\in \R$\\
if any sequence $\{u_n\}_{n\in\N}$ in $\X$ such that\\
$J(u_n)\to d \,\, \mbox{and}\,\, \sup\Big\{
\big|\langle\,J'(u_n),\varphi\,\rangle \big|\,: \;
\varphi\in \X\,,
\|\varphi\|=1\Big\}\to 0$
as $n\to \infty$,\\
admits a strongly convergent subsequence in $\X$\,.
\end{center}


\noindent The next result is a consequence of Lemma \ref{lemz1}.

\begin{prop}\label{propz2}
Suppose that $(V, K)\in \mathcal{K}$ and $f$ verifies $(f_1)-(f_4)$. Then, one has the following assertions:
\begin{compactenum}
\item[$(a)$] $\hat{\psi}\in C^{1}(\X\setminus\{0\}, \R)$ and
\begin{equation*}
\langle \hat{\psi}'(u), v \rangle=\frac{\|\hat{\eta}(u)\|}{\|u\|} \langle J'(\hat{\eta}(u)), v \rangle \,,
\end{equation*}
for every $u\in \X\setminus \{0\}$ and $v\in \X$$;$
\item[$(b)$] $\psi \in C^{1}(\mathbb{S}, \R)$ and $\langle \psi'(u), v \rangle = \|\eta(u)\| \langle J'(\eta(u)), v \rangle$, for every
\begin{equation*}
v\in T_{u}\mathbb{S}:= \left\{v \in \X : \langle v, u \rangle = \int_{\R^{2N}} \frac{(v(x)-v(y))(u(x)-u(y))}{|x-y|^{N+2\alpha}}\, dxdy + \int_{\R^{N}} V(x) uv\, dx=0 \right\};
\end{equation*}
\item[$(c)$] If $\{u_{n}\}_{n\in\N}$ is a ${(\rm PS)}_{d}$ sequence for $\psi$, then $\{\eta(u_{n})\}_{n\in\N}$ is a ${(\rm PS)}_{d}$ sequence for $J$. Moreover, if $\{u_{n}\}_{n\in\N}\subset \mathcal{N}$ is a bounded ${(\rm PS)}_{d}$ sequence for $J$, then $\{\eta^{-1} (u_{n})\}_{n\in \N}$ is a ${(\rm PS)}_{d}$ sequence for the functional $\psi$$;$
\item[$(d)$] $u$ is a critical point of $\psi$ if and only if $\eta(u)$ is a nontrivial critical point for $J$. Moreover, the corresponding critical values coincide and
\begin{equation*}
\inf_{u\in\mathbb{S}} \psi(u) = \inf_{u\in\mathcal{N}} J(u).
\end{equation*}
\end{compactenum}
\end{prop}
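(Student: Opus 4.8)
The plan is to transfer the whole analytic content of Lemma~\ref{lemz1} --- the continuity of $\hat{\eta}$, the homeomorphism property of $\eta$, and the two-sided bounds $\tau\le t_{u}\le C_{\mathcal{W}}$ --- to the composed functionals $\hat{\psi}$ and $\psi$, following the abstract scheme of Szulkin--Weth. The core of the argument is assertion $(a)$: once the $C^{1}$-character of $\hat{\psi}$ together with the derivative formula is established, assertions $(b)$--$(d)$ come out by essentially formal manipulations.

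For $(a)$, fix $u\in\X\setminus\{0\}$ and $v\in\X$, write $t:=t_{u}$ and $t_{s}:=t_{u+sv}$ for $|s|$ small (so that $u+sv\ne 0$), and use the maximality in Lemma~\ref{lemz1}$(a)$ along both rays: since $r\mapsto J(ru)$ is maximized at $t$ and $r\mapsto J(r(u+sv))$ at $t_{s}$, one obtains
\begin{align*}
J\bigl(t(u+sv)\bigr)-J(tu)\ \le\ \hat{\psi}(u+sv)-\hat{\psi}(u)\ \le\ J\bigl(t_{s}(u+sv)\bigr)-J(t_{s}u).
\end{align*}
Applying the mean value theorem to the two extremes, dividing by $s$ and letting $s\to 0^{\pm}$ --- here invoking $t_{s}\to t$, which is exactly the continuity of $\hat{\eta}$ at $u$ --- both ends tend to $t\,\langle J'(tu),v\rangle$. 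Since $tu=\hat{\eta}(u)$ and $t=\|\hat{\eta}(u)\|/\|u\|$, this is precisely the claimed Gateaux derivative; its continuity in $u$, hence Fr\'echet differentiability and $C^{1}$-regularity, follows because $u\mapsto\hat{\eta}(u)$, $u\mapsto\|\hat{\eta}(u)\|/\|u\|$ and $J'$ are all continuous.

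Assertion $(b)$ is then immediate: $\psi=\hat{\psi}|_{\mathbb{S}}$ is $C^{1}$ on the unit sphere $\mathbb{S}$, which is a $C^{1}$-submanifold of $\X$ since $\|\cdot\|$ is induced by the inner product $\langle\cdot,\cdot\rangle$ appearing in the statement, and on $T_{u}\mathbb{S}$ the formula of $(a)$ reduces to $\langle\psi'(u),v\rangle=\|\eta(u)\|\langle J'(\eta(u)),v\rangle$ because $\|u\|=1$ there. For $(c)$ I would use the orthogonal splitting $\X=\R u\oplus T_{u}\mathbb{S}$ for $u\in\mathbb{S}$ together with the identity $\langle J'(\eta(u)),\eta(u)\rangle=0$ valid on $\mathcal{N}$: writing $\varphi=\langle\varphi,u\rangle u+w$ with $w\in T_{u}\mathbb{S}$ and $\|w\|\le 2\|\varphi\|$, the $u$-component of $J'(\eta(u))$ drops out, so $\|J'(\eta(u))\|_{\X'}\le \frac{2}{\|\eta(u)\|}\|\psi'(u)\|$; since $\|\eta(u)\|=t_{u}\ge\tau$ by Lemma~\ref{lemz1}$(b)$, a $(\mathrm{PS})_{d}$ sequence $\{u_{n}\}$ for $\psi$ produces a $(\mathrm{PS})_{d}$ sequence $\{\eta(u_{n})\}$ for $J$, the energy levels agreeing because $J(\eta(u_{n}))=\hat{\psi}(u_{n})=\psi(u_{n})$. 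Conversely, for a bounded $(\mathrm{PS})_{d}$ sequence $\{u_{n}\}\subset\mathcal{N}$, one has $\eta^{-1}(u_{n})=u_{n}/\|u_{n}\|$, $\psi(u_{n}/\|u_{n}\|)=J(u_{n})\to d$, and $\|\psi'(u_{n}/\|u_{n}\|)\|\le\|u_{n}\|\,\|J'(u_{n})\|_{\X'}\to 0$ thanks to the boundedness of $\{\|u_{n}\|\}$. Finally, for $(d)$: $u$ is a critical point of $\psi$ iff $\langle J'(\eta(u)),v\rangle=0$ for all $v\in T_{u}\mathbb{S}$ (since $\|\eta(u)\|>0$), and combining with $\langle J'(\eta(u)),\eta(u)\rangle=0$ and the same splitting gives $J'(\eta(u))=0$, with $\eta(u)\ne 0$ because $\eta(u)\in\mathcal{N}$; the critical values coincide since $\psi(u)=J(\eta(u))$, and $\inf_{\mathbb{S}}\psi=\inf_{\mathcal{N}}J$ because $\eta$ maps $\mathbb{S}$ onto $\mathcal{N}$.

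The only genuinely delicate point I foresee is the $C^{1}$-regularity in $(a)$: one must orient the squeeze inequalities correctly and, above all, use the continuity of $\hat{\eta}$ (equivalently, of $u\mapsto t_{u}$) to pass to the limit in $s$. Everything else is a direct consequence of Lemma~\ref{lemz1} and the Hilbert-space structure of $\X$.
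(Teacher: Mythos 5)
Your proposal is correct and is exactly the argument the paper has in mind: the paper gives no proof of Proposition \ref{propz2}, stating only that it is ``a consequence of Lemma \ref{lemz1}'' via the abstract Szulkin--Weth framework (cf.\ the citation of \cite{SW} in the proof of Lemma \ref{lemz1}$(c)$), and what you have written is precisely that standard argument spelled out --- the two-sided maximality squeeze plus continuity of $t_{u}$ for $(a)$, and the splitting $\X=\R u\oplus T_{u}\mathbb{S}$ together with $\langle J'(\eta(u)),\eta(u)\rangle=0$ and the lower bound $t_{u}\geq\tau$ for $(c)$--$(d)$. No gaps.
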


\begin{remark}\label{lemz3}\rm{
We notice that the following equalities hold:
\begin{align}\begin{split}\label{z8}
d_{\infty}&:= \inf_{u\in \mathcal{N}} J(u) \\
&= \inf_{u\in \X\setminus \{0\}} \max_{t>0} J(tu) \\
&= \inf_{u\in \mathbb{S}} \max_{t>0} J(tu).
\end{split}\end{align}
In particular, relations (\ref{z1}), (\ref{z4}) and (\ref{z8}) imply that
\begin{equation}\label{z9}
d_{\infty}>0.
\end{equation}}
\end{remark}

\section{Technical lemmas}\label{tec1}

\noindent
The aim of this section is to prove some technical lemmas related to the existence of a least energy nodal solution.\par
For each $u\in \X$ with $u^{\pm}\not\equiv 0$, let us consider the function $h^{u}: [0, +\infty) \times [0, +\infty) \rightarrow \R$ given by
\begin{equation}\label{hv}
h^{u}(t, s) := J(tu^{+} + su^{-}).
\end{equation}
\indent Let us observe that its gradient $\Phi^{u}: [0, +\infty) \times [0, +\infty) \rightarrow \R^{2}$ is defined by
\begin{align}\begin{split}\label{Phiv}
\Phi^{u}(t, s) &:= \left ( \Phi_{1}^{u}(t, s) , \Phi_{2}^{u}(t, s) \right)  \\
&= \left(\frac{\partial h^{u}}{\partial t}(t, s), \frac{\partial h^{u}}{\partial s}(t, s)  \right)  \\
&= \left(\langle J'(tu^{+}+su^{-}), u^{+}\rangle , \langle J'(tu^{+}+su^{-}), u^{-}\rangle  \right).
\end{split}\end{align}

\begin{lem}\label{lemz11}
Suppose that $(V, K)\in \mathcal{K}$ and $f$ verifies $(f_1)-(f_4)$. Then, it follows that
\begin{compactenum}[(i)]
\item[$(i)$] The pair $(t,s)$ is a critical point of $h^{u}$ with $t,s>0$ if, and only if, $tu^{+}+su^{-}\in \mathcal{M}$$;$
\item[$(ii)$] The map $h^{u}$ has a unique critical point $(t_{+}, s_{-})$, with $t_{+}= t_{+}(u)>0$ and $s_{-}= s_{-}(u)>0$, which is the unique global maximum point of $h^{u}$$;$
\item[$(iii)$] The maps $a_{+}(r):= \Phi_{1}^{u}(r, s_{-})r$ and $a_{-}(r):= \Phi_{2}^{u}(t_{+},r)r$ are such that
\begin{align}\begin{split}\label{mapa+-}
& a_{+}(r)>0 \mbox{ if } r\in (0, t_{+}) \,\,\,\,{\rm and}\,\,\,\, a_{+}(r)<0 \mbox{ if } r\in (t_{+}, +\infty) \\
& a_{-}(r)>0 \mbox{ if } r\in (0, s_{-}) \,\,\,\,{\rm and}\,\,\,\,  a_{-}(r)<0 \mbox{ if } r\in (s_{-}, +\infty).
\end{split}\end{align}
\end{compactenum}
\end{lem}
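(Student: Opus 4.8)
The proof of Lemma \ref{lemz11} will mirror the one-dimensional analysis of Lemma \ref{lemz1}, but applied to the two-variable function $h^u(t,s) = J(tu^+ + su^-)$. The starting observation is the decomposition of $J$ along positive/negative parts recalled in Section \ref{ene}: writing
\[
J(tu^+ + su^-) = J(tu^+) + J(su^-) - ts\iint_{\R^{2N}} \frac{u^+(x)u^-(y) + u^-(x)u^+(y)}{|x-y|^{N+2\alpha}}\,dxdy,
\]
and noting that the cross term $B(u^+,u^-) := -\iint u^+(x)u^-(y)+u^-(x)u^+(y)\,dxdy$ is \emph{strictly positive} (since $u^+ \geq 0$, $u^- \leq 0$ and they have disjoint supports), so $h^u(t,s) = J(tu^+) + J(su^-) + ts\,|B(u^+,u^-)|$. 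For part $(i)$, I would simply unwind definitions: $(t,s)$ critical for $h^u$ with $t,s>0$ means $\Phi_1^u(t,s)=\langle J'(tu^++su^-),u^+\rangle=0$ and $\Phi_2^u(t,s)=\langle J'(tu^++su^-),u^-\rangle=0$; since $tu^+$ is the positive part and $su^-$ the negative part of $w:=tu^++su^-$, summing these two identities gives $\langle J'(w),w\rangle=0$, hence $w\in\mathcal N$, $w^\pm\neq0$, $\langle J'(w),w^\pm\rangle=0$, i.e. $w\in\mathcal M$; the converse is immediate.

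\textbf{Existence of a critical point.} For part $(ii)$, the existence of a maximum point is obtained exactly as in Lemma \ref{lemz1}(a): using $(f_1)$ or $(\tilde f_1)$ and $(f_2)$ one shows $h^u(t,s)>0$ for $(t,s)$ small and nonzero (the cross term only helps, being positive), and using $(f_3)$ together with Fatou's lemma one shows $h^u(t,s)\to-\infty$ as $|(t,s)|\to\infty$ — here one must be slightly careful since $(t,s)$ can go to infinity along a direction where only one of $t,s$ is large, but since $J(tu^+)\to -\infty$ (resp. $J(su^-)\to -\infty$) as $t\to\infty$ (resp. $s\to\infty$) with the cross term growing only like $ts$ while $F$ is superquadratic, the dominating negative contribution from the $F$-term wins; a cleaner route is to restrict to the compact square $[0,R]^2$ for $R$ large, check $h^u<0$ on the part of the boundary where $t=R$ or $s=R$, and note $h^u>0$ near the two coordinate axes away from $0$, so a global maximum is attained in the interior $(0,R)^2$ — at such a point the gradient vanishes, so it lies in $\mathcal M$ by $(i)$, and in particular $t,s>0$.

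\textbf{Uniqueness — the main obstacle.} The delicate part is uniqueness of the critical point. I would argue by contradiction using the monotonicity coming from $(f_4)$, following the scheme of Lemma \ref{lemz1}(a). Suppose $(t_1,s_1)$ and $(t_2,s_2)$ are two critical points. The system $\Phi^u=0$ reads
\[
t\|u^+\|^2 + s\,|B(u^+,u^-)| = \int_{\R^N} K(x) f(tu^+)u^+\,dx, \qquad
s\|u^-\|^2 + t\,|B(u^+,u^-)| = \int_{\R^N} K(x) f(su^-)u^-\,dx,
\]
where I have used that $[tu^++su^-]^2 = t^2[u^+]^2 + s^2[u^-]^2 - 2ts\,|B(u^+,u^-)|$ restricted to testing against $u^+$ gives $t[u^+]^2 - s\,|B|$ for the seminorm part, plus $t\int V|u^+|^2$. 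Dividing the first equation by $t$ and the second by $s$ and subtracting the values at $(t_1,s_1)$ from those at $(t_2,s_2)$: the left-hand sides contribute terms like $s_1/t_1 - s_2/t_2$ times $|B|$, which do not have a fixed sign, so the naive subtraction from Lemma \ref{lemz1} does not close. The standard fix — which I expect to use, and which is the heart of the argument — is the one from \cite{AS2, BF}: assume WLOG $0 < t_1 \leq s_1$ and $t_2 \geq s_2$ (after relabeling), then exploit that on $\mathcal M$ one has $\langle J'(t_1u^+ + s_1u^-), \cdot\rangle$-type inequalities; evaluating $\Phi_1^u(t_1,s_1)/t_1 - \Phi_1^u(t_1, s_1')$ type quantities and using that $t\mapsto f(t)/t$ is increasing one derives a chain of strict inequalities forcing $t_1=t_2$, then $s_1=s_2$. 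Concretely, I would use that if $(t,s)\in\mathcal M$ then for $t\mapsto h^u(t,s)$ the function $\Phi_1^u(\cdot,s)$ changes sign from $+$ to $-$ exactly once (for $s$ fixed, $J(\cdot\, u^+ + su^-)$ has the same one-bump structure as $h_{u^+}$ because the extra cross term is linear in $t$ — more precisely $\frac{\partial}{\partial t}[\Phi_1^u(t,s)t^{-1}\cdot t] $ decreasing in $t$ follows from $(f_4)$), which immediately gives part $(iii)$: the maps $a_+(r)=\Phi_1^u(r,s_-)r$ and $a_-(r)=\Phi_2^u(t_+,r)r$ inherit the sign pattern in \eqref{mapa+-}. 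Part $(iii)$ is then essentially a corollary of the monotonicity established during the uniqueness proof, evaluated with the other variable frozen at its optimal value $s_-$ or $t_+$.
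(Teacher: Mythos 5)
Your part (i) is the same as the paper's, and your part (iii) (sign of $a_{\pm}$ via the $(f_4)$-monotonicity of $r\mapsto \Phi_{1}^{u}(r,s_{-})/r$, the cross term contributing a decreasing piece $\tfrac{s_{-}}{r}|B|$) matches the paper's short argument. Your existence route for (ii) — take the global maximum of $h^{u}$ on a large square — is a legitimate and arguably cleaner alternative to the paper, which instead constructs a critical point via two auxiliary maps $\phi_{1}(s),\phi_{2}(t)$ (continuity, $\phi_{i}(0)>0$, $\phi_{i}(r)\le r$ for large $r$) and the Brouwer fixed point theorem; the paper then separately verifies that the global maximum is interior. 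One caveat: positivity of $h^{u}$ \emph{near} the axes does not exclude a maximizer \emph{on} an axis; to push it into the open quadrant you need the superadditivity $h^{u}(t,0)+h^{u}(0,s)\le h^{u}(t,s)$ (immediate from your decomposition, since the cross term $ts\,|B(u^{+},u^{-})|$ is nonnegative) together with $h^{u}(0,s)>0$ for small $s>0$. This is easily repaired.

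The genuine gap is the uniqueness in (ii), which you correctly identify as the heart of the lemma but do not actually prove. The reduction ``WLOG $0<t_{1}\le s_{1}$ and $t_{2}\ge s_{2}$'' is not available: both critical points could have $t_{i}<s_{i}$, and no relabeling fixes this because $t$ and $s$ multiply different functions. Moreover, slice-wise uniqueness of the critical point of $t\mapsto h^{u}(t,s)$ for each fixed $s$ — which is what your $(f_4)$ argument delivers, and what yields (iii) — does not imply uniqueness of the critical point of the two-variable map. The missing idea is a normalization step. If $(t_{1},s_{1})$ and $(t_{2},s_{2})$ are both critical, then $w_{1}:=t_{1}u^{+}+s_{1}u^{-}\in\mathcal{M}$ by part (i), and $\left(\tfrac{t_{2}}{t_{1}},\tfrac{s_{2}}{s_{1}}\right)$ is a critical point of $h^{w_{1}}$; hence it suffices to show that for $w\in\mathcal{M}$ the only critical point of $h^{w}$ with positive coordinates is $(1,1)$. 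For that one assumes, say, $0<t_{0}\le s_{0}$, divides the $w^{-}$-equation by $s_{0}^{2}$, and uses $t_{0}/s_{0}\le 1$ together with the sign of the nonlocal cross term and the identity $\langle J'(w),w^{-}\rangle=0$ to deduce from $(f_4)$ that $s_{0}\le 1$; the symmetric manipulation of the $w^{+}$-equation gives $t_{0}\ge 1$, whence $t_{0}=s_{0}=1$. Without this rescaling to a point of $\mathcal{M}$ and comparison with $(1,1)$, the ``chain of strict inequalities'' you invoke cannot be closed — precisely because of the sign-indefinite differences $s_{1}/t_{1}-s_{2}/t_{2}$ that you yourself point out.
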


\begin{proof}
$(i)$ Let us observe that by (\ref{Phiv}) we have
\begin{align*}
\Phi^{u}(t, s)=\left(\frac{1}{t} \langle J'(tu^{+}+su^{-}), tu^{+}\rangle, \frac{1}{s} \langle J'(tu^{+}+su^{-}), su^{+}\rangle \right),
\end{align*}
for every $t, s>0$.
Then, $\Phi^{u}(t,s)=0$ if, and only if,
\begin{equation*}
\langle J'(tu^{+}+su^{-}), tu^{+}\rangle =0 \, \mbox{ and } \, \langle J'(tu^{+}+su^{-}), su^{-}\rangle=0,
\end{equation*}
and this implies that $tu^{+}+su^{-} \in \mathcal{M}$.\par
\bigskip
\noindent
$(ii)$ Firstly we show that $h^{u}$ has a critical point. For each $u\in \X$ such that $u^{\pm}\neq 0$ and $s_{0}$ fixed, we define the function $h_{1}: [0, +\infty) \rightarrow [0, +\infty)$ by $h_{1}(t):=h^{u}(t, s_{0})$. Following the lines of Lemma \ref{lemz1}-$(a)$, we can infer that $h_1$ has a maximum positive point.\par
 Moreover, there exists a unique $t_{0}=t_{0}(u, s_{0})>0$ such that
\begin{align*}
&h_{1}'(t)>0 \, \mbox{ if } t\in (0, t_{0}) \\
&h_{1}'(t_{0})=0\\
&h_{1}'(t)<0 \, \mbox{ if } t\in (t_{0}, +\infty).
\end{align*}
Thus, the map $\phi_1: [0, +\infty) \rightarrow [0, +\infty)$ defined by $\phi_1(s):=t(u,s)$, where $t(u,s)$ satisfies the properties just mentioned with $s$ in place of $s_{0}$, is well defined.\par
By the definition of $h_1$ we have
\begin{equation}\label{h'_1}
h_{1}'(\phi_1(s))= \Phi_1^u(\phi_1(s), s)=0 \quad \forall s\geq 0,
\end{equation}
that is
$$
0=|\phi_1(s)|^{2} \|u^{+}\|^{2} - s \phi_1(s) \iint_{\R^{2N}} \frac{u^{+}(x)u^{-}(y) + u^{-}(x) u^{+}(y)}{|x-y|^{N+2\alpha}} \, dxdy
$$
\begin{equation}\label{a4}
 - \int_{\R^{N}} K(x) f(\phi_1(s)u^{+})\, \phi_1(s)u^{+} \, dx.
\end{equation}
\indent Now, we prove some properties of $\phi_1$. \par
\noindent $a)$ The map $\phi_1$ is continuous. \par
Let $s_n\rightarrow s_0$ as $n\rightarrow \infty$ in $\R$. We want to prove that $\{\phi_1(s_n)\}_{n\in \N}$ is bounded. Assume by contradiction that there is a subsequence, again denoted by $\{s_n\}_{n\in\N}$, such that $\phi_1(s_{n})\rightarrow +\infty$ as $n\rightarrow \infty$. So, $\phi_1(s_n)\geq s_n$ for $n$ large. By (\ref{a4}) we have
\begin{equation}\label{a5}
\|u^{+}\|^{2} - \frac{s_{n}}{\phi_1(s_{n})} \iint_{\R^{2N}} \frac{u^{+}(x)u^{-}(y) + u^{-}(x) u^{+}(y)}{|x-y|^{N+2\alpha}} \, dxdy = \int_{\R^{N}} K(x) \frac{f(\phi_1(s_{n})u^{+})}{\phi_1(s_{n})u^{+}}(u^{+})^{2} \, dx.
\end{equation}
\indent Taking into account that $s_n\rightarrow s_0$, $\phi_1(s_{n})\rightarrow +\infty$ as $n\rightarrow \infty$, assumptions $(f_3)-(f_4)$ and Fatou's lemma, yield
\begin{equation*}
\|u^{+}\|^{2} = \liminf_{n\rightarrow \infty} \int_{\R^{N}} K(x) \frac{f(\phi_1(s)u^{+})}{\phi_1(s)u^{+}}(u^{+})^{2} \, dx\geq +\infty.
\end{equation*}
Thus we have a contradiction. So the sequence $\{\phi_1(s_n)\}_{n\in\N}$ is bounded.\par
 Therefore there exists $t_{0}\geq 0$ such that $\phi_1(s_n)\rightarrow t_0$. Consider (\ref{a4}) with $s=s_n$, and by passing to the limit as $n\rightarrow \infty$ we have
\begin{equation*}
t_{0}^{2} \|u^{+}\|^{2}- s_{0} t_{0} \iint_{\R^{2N}} \frac{u^{+}(x)u^{-}(y) + u^{-}(x) u^{+}(y)}{|x-y|^{N+2\alpha}} \, dxdy = \int_{\R^{N}} K(x) f(\phi_1(t_0)u^{+}) \phi_1(t_0)u^{+} \, dx,
\end{equation*}
that is $h_{1}'(t_0)= \Phi^{u}_1(t_{0}, s_{0})=0$. As a consequence, $t_{0}= \phi_1(s_{0})$, i.e. $\phi_1$ is continuous. \\
\smallskip

\noindent
$b)$ $\phi_1(0)>0$. \par
Assume that there exists a sequence $\{s_n\}_{n\in\N}$ such that $\phi_1(s_n)\rightarrow 0^{+}$ and $s_n\rightarrow 0$ as $n\rightarrow \infty$. By assumption $(f_1)$ we get
\begin{align*}
\|u^{+}\|^{2} &\leq \|u^{+}\|^{2} - \frac{s_{n}}{\phi_1(s_n)} \iint_{\R^{2N}} \frac{u^{+}(x)u^{-}(y) + u^{-}(x) u^{+}(y)}{|x-y|^{N+2\alpha}} \, dxdy \\
&= \int_{\R^{N}} K(x) \frac{f(\phi_1(s_n)u^{+})}{\phi_1(s_n)u^{+}}(u^{+})^{2} \, dx \rightarrow 0,\quad \, \mbox{ as } n\rightarrow \infty
\end{align*}
and this fact gives a contradiction. So we deduce that $\phi_1(0)>0$.
\smallskip

\noindent
$c)$ Now we show that $\phi_1(s)\leq s$ for $s$ large. \par
\indent As a matter of fact, proceeding as in the first part of the proof of $a)$, we can see that
it is not possible to find any sequence $\{s_n\}_{n\in\N}$ such that $s_n\rightarrow +\infty$ and $\phi_1(s_n)\geq s_n$ for all $n\in \N$. This implies that $\phi_1(s)\leq s$ for $s$ large.
\bigskip

\indent
Analogously, for every $t_0\geq 0$ we define $h_2(s):= h^{u}(t_{0}, s)$ and, as a consequence, we can find a map $\phi_2$ such that
\begin{equation}\label{h'_2}
h'_{2}(\phi_{2}(t))= \Phi_{2}^{u}(t,\phi_{2}(t)), \quad \forall t\geq 0
\end{equation}
and satisfying $a)$, $b)$ and $c)$.\par
\smallskip
By $c)$ we can find a positive constant $C_1$ such that $\phi_1(s)\leq s$ and $\phi_2(t)\leq t$ for every $t, s \geq C_{1}$. \\
Let
$$
C_{2}:=\max\left\{\max_{s\in [0, C_{1}]} \phi_{1}(s), \max_{t\in [0, C_{1}]} \phi_{2}(t)\right\}
$$
and $C:=\max\{C_{1}, C_{2}\}$.\par
 We define $T: [0, C]\times [0, C]\rightarrow \R^{2}$ by $T(t, s):=(\phi_{1}(s), \phi_{2}(t))$. Let us note that $$T([0, C]\times [0, C])\subset [0, C]\times [0, C].$$
 Indeed, for every $t\in [0, C]$, we have that
\begin{equation*}
\left\{
\begin{array}{ll}
\phi_{2}(t)\leq t\leq C  &\mbox{ if } t\geq C_{1} \\
\phi_{2}(t)\leq \displaystyle\max_{t\in [0, C_{1}]} \phi_{2}(t)\leq C_{2} &\mbox{ if } t\leq C_{1}
\end{array}.
\right.
\end{equation*}
Similarly, we can see that $\phi_{1}(s)\leq C$ for all $s\in [0, C]$. Moreover, since $\phi_{i}$ are continuous for $i=1, 2$, it is clear that $T$ is a continuous map.\par
 Then, by the Brouwer fixed point theorem, there exists $(t_{+}, s_{-})\in [0, C]\times [0, C]$ such that
\begin{equation}
(\phi_{1}(s_{-}), \phi_{2}(t_{+}))= (t_{+}, s_{-}).
\end{equation}
Owing to this fact and recalling that $\phi_{i}>0$, we have $t_{+}>0$ and $s_{-}>0$. By  (\ref{h'_1}) and (\ref{h'_2}) we have
\begin{equation*}
\Phi_{1}^{u}(t_{+}, s_{-})= \Phi_{2}^{u}(t_{+}, s_{-}) =0,
\end{equation*}
that is $(t_{+}, s_{-})$ is a critical point of $h^{u}$.
Next we aim to prove the uniqueness of $(t_{+}, s_{-})$.\par
 Assuming that $w\in \mathcal{M}$, we have
\begin{align*}
\Phi^{w}(1, 1) &= \left ( \Phi_{1}^{w}(1, 1) , \Phi_{2}^{w}(1, 1) \right)  \\
&= \left(\frac{\partial h^{w}}{\partial t}(1, 1), \frac{\partial h^{w}}{\partial s}(1, 1)  \right)  \\
&= \left(\langle J'(w^{+}+w^{-}), w^{+}\rangle , \langle J'(w^{+}+w^{-}), w^{-}\rangle  \right)=(0,0)
\end{align*}
which implies that $(1,1)$ is a critical point of $h^{w}$. Now, assume that $(t_{0}, s_{0})$ is a critical point of $h^{w}$, with $0<t_{0}\leq s_{0}$. This means that
\begin{equation*}
\langle J'(t_{0}w^{+}+ s_{0} w^{-}), t_{0} w^{+} \rangle =0 \, \mbox{ and } \, \langle J'(t_{0}w^{+}+ s_{0} w^{-}), s_{0} w^{-} \rangle =0,
\end{equation*}
or equivalently
\begin{align}
&t_{0}^{2} \|w^{+}\|^{2} - s_{0}t_{0} \iint_{\R^{2N}} \frac{w^{+}(x)w^{-}(y) + w^{-}(x) w^{+}(y)}{|x-y|^{N+2\alpha}} \, dxdy = \int_{\R^{N}} K(x) f(t_0 w^{+}) t_{0}w^{+} \, dx \label{a13} \\
&s_{0}^{2} \|w^{-}\|^{2} - s_{0}t_{0} \iint_{\R^{2N}} \frac{w^{+}(x)w^{-}(y) + w^{-}(x) w^{+}(y)}{|x-y|^{N+2\alpha}} \, dxdy = \int_{\R^{N}} K(x) f(s_0 w^{-}) s_{0}w^{-} \, dx. \label{a14}
\end{align}
\indent Dividing by $s_{0}^{2}>0$ in (\ref{a14}), we have
\begin{align*}\begin{split}
\|w^{-}\|^{2} - \frac{t_{0}}{s_{0}} \iint_{\R^{2N}} \frac{w^{+}(x)w^{-}(y) + w^{-}(x) w^{+}(y)}{|x-y|^{N+2\alpha}} \, dxdy = \int_{\R^{N}} K(x) \frac{f(s_0 w^{-})}{s_{0}w^{-}}(w^{-})^{2} \, dx,
\end{split}\end{align*}
and by using the fact that $0<t_{0}\leq s_{0}$ we can see that
\begin{align}\begin{split}\label{a15}
\|w^{-}\|^{2} - \iint_{\R^{2N}} \frac{w^{+}(x)w^{-}(y) + w^{-}(x) w^{+}(y)}{|x-y|^{N+2\alpha}} \, dxdy \geq \int_{\R^{N}} K(x) \frac{f(s_0 w^{-})}{s_{0}w^{-}}(w^{-})^{2} \, dx.
\end{split}\end{align}
\indent Since $w\in \mathcal{M}$, we also have
\begin{align}\begin{split}\label{a16}
\|w^{-}\|^{2} - \iint_{\R^{2N}} \frac{w^{+}(x)w^{-}(y) + w^{-}(x) w^{+}(y)}{|x-y|^{N+2\alpha}} \, dxdy = \int_{\R^{N}} K(x) \frac{f(w^{-})}{w^{-}}(w^{-})^{2} \, dx.
\end{split}\end{align}
Putting together (\ref{a15}) and (\ref{a16}) we get
\begin{align*}
0\geq  \int_{\R^{N}} K(x) \left[\frac{f(s_0 w^{-})}{s_{0}w^{-}}(w^{-})^{2} -\frac{f(w^{-})}{w^{-}}(w^{-})^{2}\right] \, dx.
\end{align*}
\indent The above relation and assumption $(f_4)$ ensures that $0<t_{0}\leq s_{0}\leq 1$. \\
Now we prove that $t_{0}\geq 1$. Dividing by $t_{0}^{2}>0$ in (\ref{a13}), we have
\begin{align*}\begin{split}
\|w^{+}\|^{2} - \frac{s_{0}}{t_{0}} \iint_{\R^{2N}} \frac{w^{+}(x)w^{-}(y) + w^{-}(x) w^{+}(y)}{|x-y|^{N+2\alpha}} \, dxdy = \int_{\R^{N}} K(x) \frac{f(t_0 w^{+})}{t_{0}w^{+}}(w^{+})^{2} \, dx
\end{split}\end{align*}
and by using $0<t_{0}\leq s_{0}$ we deduce that
\begin{align}\begin{split}\label{a17}
\|w^{+}\|^{2} - \iint_{\R^{2N}} \frac{w^{+}(x)w^{-}(y) + w^{-}(x) w^{+}(y)}{|x-y|^{N+2\alpha}} \, dxdy \leq \int_{\R^{N}} K(x) \frac{f(t_0 w^{+})}{t_{0}w^{+}}(w^{+})^{2} \, dx.
\end{split}\end{align}
Since $w\in \mathcal{M}$, we also have
\begin{align}\begin{split}\label{a18}
\|w^{+}\|^{2} - \iint_{\R^{2N}} \frac{w^{+}(x)w^{-}(y) + w^{-}(x) w^{+}(y)}{|x-y|^{N+2\alpha}} \, dxdy = \int_{\R^{N}} K(x) \frac{f(w^{+})}{w^{+}}(w^{+})^{2} \, dx.
\end{split}\end{align}
Putting together (\ref{a17}) and (\ref{a18}) we get
\begin{align*}
0\geq  \int_{\R^{N}} K(x) \left[\frac{f(w^{+})}{w^{+}}(w^{+})^{2} -\frac{f(t_{0}w^{+})}{t_{0}w^{+}}(w^{+})^{2}\right] \, dx.
\end{align*}
\indent By $(f_4)$ it follows that $t_{0}\geq1$. Consequently $t_{0}=s_{0}=1$, and this proves that $(1,1)$ is the unique critical point of $h^{w}$ with positive coordinates.  \\
\indent Let $u^{\pm}\in \X$ be such that $u^{\pm} \neq 0$, and let $(t_1, s_1), (t_2, s_2)$ be critical points of $h^{u}$ with positive coordinates. By $(i)$ it follows that
\begin{equation*}
w_1= t_1 u^{+} + s_1 u^{-} \in \mathcal{M} \, \mbox{ and } \, w_2= t_2 u^{+} + s_2 u^{-} \in \mathcal{M}.
\end{equation*}
\indent We notice that $w_2$ can be written as
\begin{equation*}
w_2 = \left(\frac{t_2}{t_1}\right) t_1 u^{+} + \left(\frac{s_2}{s_1}\right) s_1 u^{-} = \frac{t_2}{t_1} w_1^{+} + \frac{s_2}{s_1} w_1^{-} \in \mathcal{M}.
\end{equation*}
Since $w_1\in \X$ is such that $w_1^{\pm}\neq 0$, we have that $\left({t_2}/{t_1}, {s_2}/{s_1}\right)$ is a critical point for $h^{w_1}$ with positive coordinates.\par
 On the other hand, since $w_1 \in \mathcal{M}$, we can conclude that ${t_2}/{t_1}= {s_2}/{s_1}=1$, which gives $t_1= t_2$ and $s_1= s_2$.\par
 Finally, we prove that $h^{u}$ has a maximum global point $(\bar{t}, \bar{s})\in (0, +\infty)\times (0, +\infty)$. Let $A^{+}\subset \supp u^{+}$ and $A^{-}\subset \supp u^{-}$ positive with finite measure. By assumption $(f_3)$ and the fact that $F(t)\geq 0$ for every $t\in \R$, it follows that
\begin{align*}
h^{u}(t,s) &\leq \frac{1}{2} \|tu^{+}+su^{-}\|^{2}-\int_{A^{+}} K(x) F(tu^{+})\, dx - \int_{A^{-}} K(x) F(su^{-})\, dx\\
&\leq  \frac{t^{2}}{2}\|u^{+}\|^{2}+\frac{s^{2}}{2}\|u^{-}\|^{2}-st \iint_{\R^{2N}} \frac{u^{+}(x)u^{-}(y)+u^{+}(y)u^{-}(x)}{|x-y|^{N+2\alpha}} \, dx dy \\
&-\int_{A^{+}} K(x) F(tu^{+})\, dx - \int_{A^{-}} K(x) F(su^{-})\, dx.
\end{align*}
\indent Let us suppose that $|t|\geq |s|>0$.
Then, by using the fact that $F(t)\geq 0$ for every $t\in \R$, we can see that
\begin{align*}
h^{u}(t, s)&\leq (t^{2}+s^{2}) \Bigl[ \frac{1}{2}\|u^{+}\|^{2}+\frac{1}{2}\|u^{-}\|^{2}-\frac{1}{2} \iint_{\R^{2N}} \frac{u^{+}(x)u^{-}(y)+u^{+}(y)u^{-}(x)}{|x-y|^{N+2\alpha}} \, dx dy\Bigr] \\
&-t^{2}\int_{A^{+}} K(x) \frac{F(tu^{+})}{(t u^{+})^{2}}(u^{+})^{2}\, dx.
\end{align*}
Condition $(f_3)$, Fatou's lemma and the fact that $0<t^{2}+s^{2}\leq 2 t^{2}$, ensure that
$$
\limsup_{|(t,s)|\rightarrow \infty}\frac{h^{u}(t,s)}{t^{2}+s^{2}}\leq C(u^{+}, u^{-})-\frac{1}{2}\liminf_{|t|\rightarrow \infty}\int_{A^{+}} K(x) \frac{F(tu^{+})}{(t u^{+})^{2}}(u^{+})^{2}\, dx=-\infty,
$$
where $C(u^{+}, u^{-})> 0$ is a constant depending only on $u^{+}$ and $u^{-}$.\\
\indent Therefore
\begin{equation}\label{formuletta0}
\lim_{|(t,s)|\rightarrow \infty} h^{u}(t,s)= -\infty.
\end{equation}
By \eqref{formuletta0}, and recalling that $h^{u}$ is a continuous function, we deduce that $h^{u}$ has a maximum global point $(\bar{t}, \bar{s})\in (0, +\infty)\times (0, +\infty)$. \\
\indent The linearity of $F$ and the positivity of $K$ yield
\begin{equation}\label{formuletta}
\int_{\R^{N}} K(x) (F(tu^{+})+ F(su^{-}))\, dx = \int_{\R^{N}} K(x) F(tu^{+}+su^{-})\, dx.
\end{equation}
By \eqref{formuletta} for all $u\in \X$ such that $u^{\pm}\neq 0$ and for every $t,s\geq 0$, it follows that
\begin{equation*}
J(tu^{+})+ J(su^{-})\leq J(tu^{+}+su^{-}).
\end{equation*}
\indent So, for every $u\in \X$ such that $u^{\pm}\neq 0$ one has
\begin{equation*}
h^{u}(t,0) + h^{u}(0,s) \leq h^{u}(t,s),
\end{equation*}
for every $t,s\geq 0$.\par
Then,
\begin{align*}
\max_{t\geq 0} h^{u}(t,0) < \max_{t,s>0} h^{u}(t,s) \, \mbox{ and } \, \max_{s\geq 0} h^{u}(0,s)<\max_{t,s>0} h^{u}(t,s),
\end{align*}
and this proves that $(\bar{t}, \bar{s})\in (0, +\infty)\times (0, +\infty)$.
\bigskip

\noindent
$(iii)$ By Lemma \ref{lemz1}-$(a)$ we easily have that
\begin{align*}
&\Phi_{1}^{u}(r, s_{-})= \frac{\partial h^{u}}{\partial t}(r, s_{-})>0 \, \mbox{ if } \, r\in (0, t_{+}) \\
&\Phi_{1}^{u}(t_{+}, s_{-})= \frac{\partial h^{u}}{\partial t}(t_{+}, s_{-})=0 \\
&\Phi_{1}^{u}(r, s_{-})= \frac{\partial h^{u}}{\partial t}(r, s_{-})>0 \, \mbox{ if } \, r\in(t_{+}, +\infty).
\end{align*}
Therefore (\ref{mapa+-}) holds true. The proof of Lemma \ref{lemz11} is now complete.
\end{proof}

\begin{lem}\label{lemz12}
If $\{u_{n}\}_{n\in \N}\subset \mathcal{M}$ and $u_{n}\rightharpoonup u$ in $\X$, then $u\in \X$ and $u^{\pm}\neq 0$.
\end{lem}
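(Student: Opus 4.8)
The claim $u\in\X$ is immediate, since $\X$ is a Hilbert space and hence weakly sequentially closed; the substance of the lemma is that the weak limit retains both its parts. The plan is to produce a uniform lower bound $\|u_n^{+}\|,\|u_n^{-}\|\geq\rho>0$ and then pass to the limit in a Nehari-type identity for the parts. First I would extract the key inequality from $u_n\in\mathcal{M}$. Combining $\langle J'(u_n),u_n^{+}\rangle=0$ with the decomposition
$$
\langle J'(u_n),u_n^{+}\rangle=\langle J'(u_n^{+}),u_n^{+}\rangle-\iint_{\R^{2N}}\frac{u_n^{+}(x)u_n^{-}(y)+u_n^{-}(x)u_n^{+}(y)}{|x-y|^{N+2\alpha}}\,dxdy
$$
and noting that the double integral is $\leq 0$ because $u_n^{+}\geq 0\geq u_n^{-}$, one gets $\langle J'(u_n^{+}),u_n^{+}\rangle\leq 0$, i.e.
$$
\|u_n^{+}\|^{2}\leq\int_{\R^{N}}K(x)f(u_n^{+})u_n^{+}\,dx,
$$
and symmetrically with $u_n^{-}$ in place of $u_n^{+}$ (recall that $u_n^{\pm}\neq 0$ since $u_n\in\mathcal{M}$).

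Next I would bound the right-hand side from above using the growth of $f$, exactly along the lines of the estimates (\ref{z1})--(\ref{z11}) in the proof of Lemma \ref{lemz1}. If $(h_3)$ and $(f_1)$ hold, the pointwise inequality $|f(t)t|\leq\varepsilon t^{2}+C_\varepsilon|t|^{2^{*}_{\alpha}}$ together with Lemma \ref{cont} gives
$$
\|u_n^{+}\|^{2}\leq\varepsilon\left\|K/V\right\|_{L^{\infty}(\R^{N})}\|u_n^{+}\|^{2}+C\,\|u_n^{+}\|^{2^{*}_{\alpha}},
$$
and choosing $\varepsilon$ with $\varepsilon\|K/V\|_{L^{\infty}(\R^{N})}\leq\tfrac12$ yields $\tfrac12\|u_n^{+}\|^{2}\leq C\|u_n^{+}\|^{2^{*}_{\alpha}}$, hence $\|u_n^{+}\|\geq\rho>0$ with $\rho$ independent of $n$ (recall $2<2^{*}_{\alpha}$). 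If instead $(h_4)$ and $(\tilde{f}_1)$ hold, one uses $|f(t)t|\leq C_1|t|^{m}+C_2|t|^{2^{*}_{\alpha}}$ and (\ref{z3}) to obtain $\tfrac12\|u_n^{+}\|^{2}\leq C'\|u_n^{+}\|^{m}+C''\|u_n^{+}\|^{2^{*}_{\alpha}}$ with $2<m<2^{*}_{\alpha}$, which again forces $\|u_n^{+}\|\geq\rho>0$; the same applies to $u_n^{-}$.

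Finally I would pass to the limit. Since $\{u_n\}$ is bounded in $\X$ and $V$ is positive and continuous, $\{u_n\}$ is bounded in $H^{\alpha}(\B_R(0))$ for every $R$, so by Rellich compactness and a diagonal argument, along a subsequence $u_n\to u$ a.e.\ in $\R^{N}$, hence $u_n^{\pm}\to u^{\pm}$ a.e. As $\|u_n^{\pm}\|\leq\|u_n\|$ (because $|u^{\pm}(x)-u^{\pm}(y)|\leq|u(x)-u(y)|$ pointwise), the sequences $\{u_n^{\pm}\}$ are bounded in $\X$, so $u_n^{\pm}\rightharpoonup u^{\pm}$ in $\X$. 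Applying Lemma \ref{lem2.1} to $\{u_n^{+}\}$ gives $\int_{\R^{N}}K(x)f(u_n^{+})u_n^{+}\,dx\to\int_{\R^{N}}K(x)f(u^{+})u^{+}\,dx$, so that
$$
0<\rho^{2}\leq\|u_n^{+}\|^{2}\leq\int_{\R^{N}}K(x)f(u_n^{+})u_n^{+}\,dx\longrightarrow\int_{\R^{N}}K(x)f(u^{+})u^{+}\,dx .
$$
Thus $\int_{\R^{N}}K(x)f(u^{+})u^{+}\,dx\geq\rho^{2}>0$, which is impossible if $u^{+}\equiv 0$; hence $u^{+}\neq 0$, and the same reasoning gives $u^{-}\neq 0$. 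The step I expect to be the main obstacle is the uniform lower bound $\|u_n^{\pm}\|\geq\rho$: because $J$ does not split over positive and negative parts, one cannot treat $u_n^{\pm}$ as elements of $\mathcal{N}$ and must exploit the sign of the nonlocal cross term, and the estimate has to be run separately in the two regimes $(h_3)/(f_1)$ and $(h_4)/(\tilde{f}_1)$ with the critical Sobolev exponent; the convergence of the nonlinear term is then routine via Lemma \ref{lem2.1}, once $u_n^{\pm}\rightharpoonup u^{\pm}$ has been justified through local compactness.
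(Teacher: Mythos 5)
Your proposal is correct and follows essentially the same route as the paper: the sign of the nonlocal cross term gives $\|u_n^{\pm}\|^{2}\leq\int_{\R^{N}}K(x)f(u_n^{\pm})u_n^{\pm}\,dx$, the growth conditions in the two regimes $(h_3)/(f_1)$ and $(h_4)/(\tilde f_1)$ yield the uniform lower bound $\|u_n^{\pm}\|\geq\rho>0$, and Lemma \ref{lem2.1} passes this to the limit to force $u^{\pm}\neq 0$. Your justification that $u_n^{\pm}\rightharpoonup u^{\pm}$ (via a.e.\ convergence and boundedness of the parts) is a detail the paper leaves implicit, and it is a welcome addition.
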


\begin{proof}
Let us observe that there is $\beta>0$ such that
\begin{equation}\label{b17}
\beta \leq \|v^{\pm}\| \quad \forall v\in \mathcal{M}.
\end{equation}
Indeed, if $v\in \mathcal{M}$, then
\begin{equation*}
\|v^{\pm}\|^{2} \leq \int_{\R^{N}} K(x) f(v^{\pm})v^{\pm} \, dx.
\end{equation*}
\indent Assume that $(h_3)$ holds true. Then by using $(f_{1})$, $(f_{2})$, and the Sobolev inequality, we can see that given $\varepsilon>0$ there exists a positive constant $C_{\varepsilon}$ such that
\begin{align}\begin{split}\label{a22}
\|v^{\pm}\|^{2} &\leq \int_{\R^{N}} K(x) f(v^{\pm})v^{\pm} \, dx \\
&\leq \varepsilon \left\|{K}/{V}\right\|_{L^{\infty}(\R^{N})} \int_{\R^{N}} V(x) (v^{\pm})^{2} \, dx + C_{\varepsilon} C_{*}\|K\|_{L^{\infty}(\R^{N})} \|v^{\pm}\|^{2^{*}_{\alpha}}\\
&\leq \varepsilon \left\|{K}/{V}\right\|_{L^{\infty}(\R^{N})} \|v^{\pm}\|^{2} + C_{\varepsilon}C_{*} \|K\|_{L^{\infty}(\R^{N})} \|v^{\pm}\|^{2^{*}_{\alpha}}.
\end{split}\end{align}
Choosing $$\varepsilon\in \left(0, \frac{1}{\left\|{K}/{V}\right\|_{L^{\infty}(\R^{N})}}\right),$$ there exists a positive constant $\beta_1$ such that $\|v^{\pm}\|>\beta_1$.\par
Analogously, by assuming that $(h_4)$ holds, conditions $(\tilde{f}_{1})$ and $(f_{2})$, the Sobolev embedding result and the H\"older inequality ensure that
\begin{align}\begin{split}\label{a22}
\|v^{\pm}\|^{2} &\leq \int_{\R^{N}} K(x) f(v^{\pm})v^{\pm} \, dx \\
&\leq C_1 \varepsilon \|v^{\pm}\|^{2} + C_{1}C_{*} (\varepsilon+ C_2 \|K\|_{L^{\infty}(\R^{N})}) \|v^{\pm}\|^{2^{*}_{\alpha}} + \|K\|_{L^{\frac{2^{*}_{\alpha}}{2^{*}_{\alpha}-m}}(\B_{R}(0))} C_{*} \|v^{\pm}\|^{m}.
\end{split}\end{align}
Since $m\in (2, 2^{*}_{\alpha})$, we can choose $\varepsilon$ sufficiently small such that it is possible to find a positive constant $\beta_2$ such that $\|v^{\pm}\|>\beta_2$. \par
Hence, if we set $\beta:= \min \{\beta_1, \beta_2 \}$ inequality (\ref{b17}) immediately holds.\par
\smallskip
\noindent So, if $\{u_{n}\}_{n\in\N}\subset \mathcal{M}$, we have
\begin{equation}\label{b19}
\beta^{2} \leq \int_{\R^{N}} K(x) f(u_{n}^{\pm}) u_{n}^{\pm} \, dx \,,\quad \forall n\in \N.
\end{equation}
 Since $u_{n}\rightharpoonup u$ in $\X$, bearing in mind Lemma \ref{prop2.2}, we can pass to the limit in (\ref{b19}) as $n\rightarrow \infty$.\par
  More precisely, by using Lemma \ref{lem2.1}, it follows that
\begin{equation*}
0<\beta^{2} \leq \int_{\R^{N}} K(x) f(u^{\pm}) u^{\pm} \, dx.
\end{equation*}
Thus $u\in \X$ and $u^{\pm}\neq 0$. The proof is now complete.
\end{proof}

Let us denote by $c_{\infty}$ the number
\begin{equation*}
c_{\infty}:= \inf_{u\in \mathcal{M}} J(u).
\end{equation*}
Since $\mathcal{M}\subset \mathcal{N}$, we deduce
\begin{equation}\label{z10}
c_{\infty}\geq d_{\infty}>0.
\end{equation}


\section{Proof of Theorem \ref{thmf}}\label{fine}

\noindent In this section we prove the existence of energy nodal weak solutions by using minimization arguments and a variant of the Deformation lemma. We start by proving the existence of a minimum point of the functional $J$ in $\mathcal{M}$. \par
Let $\{u_{n}\}_{n\in \N}\subset \mathcal{M}$ be such that
\begin{equation}\label{c1}
J(u_{n})\rightarrow c_{\infty} \quad \mbox{ in } \R.
\end{equation}
Our aim is to prove that $\{u_{n}\}_{n\in\N}$ is bounded in $\X$.\par
 Indeed, assume by contradiction that that there exists a subsequence, denoted again by $\{u_{n}\}_{n\in\N}$, such that $\|u_{n}\|\rightarrow +\infty$ as $n\rightarrow \infty$. Thus, let us define $$v_{n}:= \frac{u_{n}}{\|u_{n}\|},$$ for every $n\in \N$. Since $\{v_{n}\}_{n\in\N}$ is bounded in $\X$, due to the reflexivity of $\X$, there exists $v\in \X$ such that
\begin{equation}\label{c3}
v_{n}\rightharpoonup v \quad \mbox{ in } \X.
\end{equation}
\indent Moreover, in virtue of Lemma \ref{cont}, it follows that
\begin{equation}\label{c4}
v_{n}(x) \rightarrow v(x) \, \mbox{ a.e. in } \R^{N}.
\end{equation}
\noindent By Lemma \ref{lemz11}-$(i)$ and $\{u_{n}\}_{n\in\N}\subset \mathcal{M}$, we have that $t_{+}(v_{n})= s_{-}(v_{n})=\|u_{n}\|$ and
\begin{align}\label{c5}
J(u_{n})&= J(\|u_{n}\| v_{n})\geq J(tv_{n})\nonumber\\
&= \frac{t^{2}}{2} \|v_n\|^{2} - \int_{\R^{N}}K(x)F(tv_{n})\, dx \\
&= \frac{t^{2}}{2} - \int_{\R^{N}}K(x)F(tv_{n})\, dx,\nonumber
\end{align}
for every $t>0$ and $n\in \N$. \par
Suppose that $v=0$. Taking into account (\ref{c3}) and Lemma \ref{lem2.1} we get
\begin{equation}\label{c6}
\int_{\R^{N}} K(x) F(tv_{n}) \rightarrow 0, \quad \forall t>0.
\end{equation}
By passing to the limit in \eqref{c5}, as $n\rightarrow \infty$, and combining (\ref{c1}) and (\ref{c6}) we have
\begin{equation*}
c_{\infty}\geq \frac{t^{2}}{2}, \quad \forall t>0
\end{equation*}
which is a contradiction.\par
Hence, $v\neq 0$. Taking into account the definitions of $J$ and $\{v_{n}\}_{n\in\N}$ we have
\begin{equation}\label{c7}
\frac{J(u_{n})}{\|u_{n}\|^{2}} = \frac{1}{2} - \int_{\R^{N}} K(x) \frac{F(v_{n}\|u_{n}\|)}{(v_{n}\|u_{n}\|)^{2}} (v_{n})^{2}\, dx.
\end{equation}
Now, since $v\neq 0$ and $\|u_{n}\|\rightarrow +\infty$, by using (\ref{c4}) in addition to $(f_3)$, the Fatou's lemma ensures that
\begin{equation}\label{c8}
\int_{\R^{N}} K(x)\frac{F(v_{n} \|u_{n}\|)}{(v_{n} \|u_{n}\|)^{2}}v_{n}^{2} \, dx \rightarrow +\infty.
\end{equation}
 \indent Since (\ref{c8}) holds true, bearing in mind (\ref{c1}) and passing to the limit in (\ref{c7}), as $n\rightarrow \infty$, we have a contradiction.\par
Therefore $\{u_{n}\}_{n\in\N}\subset \X$ is a bounded subsequence. As a consequence, there exists $u\in \X$ such that
\begin{equation}\label{c9}
u_{n}\rightharpoonup u \quad \mbox{ in } \X.
\end{equation}
By Lemma \ref{lemz12} it follows that $u^{\pm}\neq 0$. Moreover, by Lemma \ref{lemz11}, there are two constants $t_{+}, s_{-}>0$ such that
\begin{equation}\label{c10}
t_{+}u^{+}+s_{-}u^{-}\in \mathcal{M}.
\end{equation}
\indent Now, our aim is to prove that $t_{+}, s_{-}\in (0, 1]$. By (\ref{c9}) and Lemma \ref{lem2.1} we have
\begin{align}\label{c12}
\int_{\R^{N}} K(x) f(u_{n}^{\pm}) u_{n}^{\pm} \, dx \rightarrow \int_{\R^{N}} K(x) f(u^{\pm}) u^{\pm}\, dx
\end{align}
and
\begin{align}\label{c13}
\int_{\R^{N}} K(x) F(u_{n}^{\pm}) \, dx \rightarrow \int_{\R^{N}} K(x) F(u^{\pm}) \, dx.
\end{align}
\indent Recalling that $\{u_n\}_{n\in \N}\subset \mathcal{M}$, by using (\ref{c9}) and (\ref{c12}), Fatou's lemma ensures that
\begin{align}\label{c14}
&\langle J'(u), u^{\pm} \rangle \nonumber\\
&=\|u^{\pm}\|^{2}-\iint_{\R^{2N}} \frac{u^{+}(x)u^{-}(y)+u^{-}(x)u^{+}(y)}{|x-y|^{N+2\alpha}}\, dx dy-\int_{\R^{N}} K(x) f(u^{\pm})u^{\pm} dx \nonumber \\
&\leq \liminf_{n\rightarrow \infty} \langle J'(u_{n}), u_{n}^{\pm} \rangle =0.
\end{align}
Let us assume $0<t_{+}<s_{-}$. By (\ref{c10}) we deduce
\begin{equation*}
s^{2}_{-}\|u^{-}\|^{2}-t_{+}s_{-}\iint_{\R^{2N}} \frac{u^{+}(x)u^{-}(y)+u^{-}(x)u^{+}(y)}{|x-y|^{N+2\alpha}} \, dx dy= \int_{\R^{N}} K(x) f(s_{-}u^{-})\, s_{-}u^{-} \, dx,
\end{equation*}
and by using $t_{+}<s_{-}$ we obtain
\begin{equation}\label{c15}
\|u^{-}\|^{2}-\iint_{\R^{2N}} \frac{u^{-}(x)u^{+}(y)+u^{-}(y)u^{+}(x)}{|x-y|^{N+2\alpha}} \, dx dy\geq \int_{\supp u^{-}} K(x) \frac{f(s_{-}u^{-})}{s_{-}u^{-}} (u^{-})^{2}\, dx.
\end{equation}
\indent By (\ref{c14}) we have
\begin{equation}\label{c16}
\|u^{-}\|^{2}-\iint_{\R^{2N}} \frac{u^{-}(x)u^{+}(y)+u^{-}(y)u^{+}(x)}{|x-y|^{N+2\alpha}} \, dx dy \leq \int_{\supp u^{-}} K(x) \frac{f(u^{-})}{u^{-}}(u^{-})^{2}\, dx.
\end{equation}
Putting together (\ref{c15}) and (\ref{c16}) we can deduce that
\begin{equation}\label{c17}
0\geq \int_{\supp u^{-}} K(x) \left[ \frac{f(s_{-}u^{-})}{s_{-}u^{-}} - \frac{f(u^{-})}{u^{-}}\right] (u^{-})^{2} \, dx,
\end{equation}
which yields $s_{-}\in (0, 1]$ in virtue of $(f_4)$. Similarly, we can show that $t_{+}\in (0, 1]$. \par
\indent Now, we prove that
\begin{equation}\label{c18}
J(t_{+}u^{+}+s_{-}u^{-})= c_{\infty}.
\end{equation}
\indent By using the definitions of $c_{\infty}$, $t_{+}, s_{-}\in (0, 1]$, exploiting condition $(f_4)$, and taking into account relations (\ref{c10}), (\ref{c12}) and (\ref{c13}), we get
\begin{align}\begin{split}\label{c19}
c_{\infty}&\leq J(t_{+}u^{+}+s_{-}u^{-})\\
&= J(t_{+}u^{+}+s_{-}u^{-})-\frac{1}{2}\langle J'(t_{+}u^{+}+s_{-}u^{-}), t_{+}u^{+}+s_{-}u^{-}\rangle \\
&=\int_{\R^{N}} K(x) \left[ \frac{1}{2} f(t_{+}u^{+}+s_{-}u^{-}) (t_{+}u^{+}+s_{-}u^{-}) - F(t_{+}u^{+}+s_{-}u^{-}) \right]\, dx \\
&=\int_{\R^{N}} K(x) \left[ \frac{1}{2} f(t_{+}u^{+})(t_{+}u^{+}) - F(t_{+}u^{+}) \right] \, dx+\int_{\R^{N}} K(x) \left[ \frac{1}{2} f(s_{-}u^{-})(s_{-}u^{-}) - F(s_{-}u^{-}) \right]\, dx \\
&\leq \int_{\R^{N}} K(x) \left[ \frac{1}{2} f(u^{+})(u^{+}) - F(u^{+}) \right] \, dx+\int_{\R^{N}} K(x) \left[ \frac{1}{2} f(u^{-})(u^{-}) - F(u^{-}) \right]\, dx \nonumber \\
&=\int_{\R^{N}} K(x) \left[\frac{1}{2}f(u)u-F(u) \right]\, dx \nonumber\\
&= \lim_{n\rightarrow \infty} \int_{\R^{N}} K(x) \left[\frac{1}{2}f(u_{n})u_{n}-F(u_{n})\right]\, dx\\
&=\lim_{n\rightarrow \infty} \left[J(u_{n}) -\frac{1}{2}\langle J'(u_{n}), u_{n}\rangle \right]=c_{\infty}.
\end{split}\end{align}
Hence (\ref{c18}) holds true. Furthermore, the above calculation implies that $t_{+}=s_{-}=1$. \par
\indent Now, we prove that $u=u^{+}+u^{-}$ is a critical point of the functional $J$ arguing by contradiction. Thus, let us suppose that $J'(u)\neq 0$. By continuity there exist $\delta, \mu>0$ such that
\begin{equation}\label{c22}
\mu \leq |J'(v)|, \, \mbox{ since } \|v- u\|\leq 3\delta.
\end{equation}
\indent Define $\it{D}:= [\frac{1}{2}, \frac{3}{2}]\times [\frac{1}{2}, \frac{3}{2}]$ and $g: \it{D}\rightarrow \X^{\pm}$ by
\begin{equation*}
g(t,s):=tu^{+}+su^{-},
\end{equation*}
where $\X^{\pm}:= \{u\in \X : u^{\pm}\neq 0\}$.\par
 By Lemma \ref{lemz11} we deduce
\begin{align*}
&J(g(1,1))=c_{\infty}\\
&J(g(t,s))<c_{\infty} \,\quad \mbox{in } \it{D}\setminus\{(1,1)\}.
\end{align*}
Thus, we have
\begin{equation}\label{c23}
\beta:=\max_{(t,s)\in \partial \it{D}} J(g(t,s)) <c_{\infty}.
\end{equation}
\indent Now, we apply \cite[Theorem 2.3]{Willem} with $$\tilde{\mathcal{S}}:=\{v\in \X : \|v-u\|\leq \delta \},$$ and $c:=c_{\infty}$.\par
 Choosing $\varepsilon := \displaystyle\min \left\{ \frac{c_{\infty}-\beta}{4}, \frac{\mu \delta}{8}\right\}$, we deduce that there exists a deformation $\eta\in C([0,1]\times \X, \X)$ such that the following assertions hold:
\smallskip
\begin{compactenum}[(a)]
\item $\eta(t,v)= v$ if $v\in J^{-1}([c_{\infty}-2\varepsilon, c_{\infty}+2\varepsilon])$;
\item $J(\eta(1,v))\leq c_{\infty}-\varepsilon$ for each $v\in \X$ with $\|v-u\|\leq \delta$ and $J(v)\leq c_{\infty}+\varepsilon$;
\item $J(\eta(1,v))\leq J(v)$ for all $u\in \X$.
\end{compactenum}
\smallskip

\indent By $(b)$ and $(c)$ we conclude that
\begin{equation}\label{c24}
\max_{(t,s)\in \partial \it{D}} J(\eta(1, g(t,s))) <c_{\infty}.
\end{equation}
\indent To complete the proof it suffices to prove that
\begin{equation}\label{c25}
\eta(1, g(\it{D})) \cap \mathcal{M}\neq \emptyset.
\end{equation}
Indeed, the definition of $c_{\infty}$ and (\ref{c25}) contradict (\ref{c24}).\par
Hence, let us define the maps
\begin{align*}
&h(t,s):= \eta(1, g(t,s)), \\
&\psi_{0}(t,s):= \left(J'(g(t,1)) tu^{+}, J'(g(1,s)) su^{-} \right) \\
&\psi_{1}(t,s):= \left(\frac{1}{t} J'(h(t,1))h(t,1)^{+}, \frac{1}{s} J'(h(1,s))h(1,s)^{-}\right).
\end{align*}

\indent  By Lemma \ref{lemz11}-$(iii)$, the $C^{1}$-function $\gamma_{+}(t)=h^{u}(t,1)$ has a unique global maximum point $t=1$ (note that $t\gamma'_{+}(t)=\langle J'(g(t,1)), tu^{+}\rangle $). By density, given $\varepsilon>0$ small enough, there is $\gamma_{+, \varepsilon}\in C^{\infty}([\frac{1}{2}, \frac{3}{2}])$ such that $\|\gamma_{+} - \gamma_{+, \varepsilon}\|_{C^{1}([\frac{1}{2}, \frac{3}{2}])}<\varepsilon$ with $t_{+}$ being the unique maximum global point of $\gamma_{+, \varepsilon}$ in $[\frac{1}{2}, \frac{3}{2}]$. Therefore, $\|\gamma'_{+} - \gamma'_{+, \varepsilon}\|_{C([\frac{1}{2}, \frac{3}{2}])}<\varepsilon$, $\gamma'_{+, \varepsilon}(1)=0$ and $\gamma''_{+, \varepsilon}(1)<0$. Analogously, there exists $\gamma_{-, \varepsilon}\in C^{\infty}([\frac{1}{2}, \frac{3}{2}])$ such that $\|\gamma'_{-} - \gamma'_{-, \varepsilon}\|_{C([\frac{1}{2}, \frac{3}{2}])}<\varepsilon$, $\gamma'_{+, \varepsilon}(1)=0$ and $\gamma''_{+, \varepsilon}(1)<0$, where $\gamma_{-}(s)=h^{u}(1,s)$. \par

\indent Let us define $\psi_{\varepsilon}\in C^{\infty}(\it{D})$ by $\psi_{\varepsilon}(t,s):= (t \gamma'_{+, \varepsilon}(t), s \gamma'_{-, \varepsilon}(s))$ and note that $\|\psi_{\varepsilon} - \psi_{0} \|_{C(\it{D})}<\frac{3\sqrt{2}}{2} \varepsilon$, $(0,0) \not \in \psi_{\varepsilon}(\partial \it{D})$, and, $(0,0)$ is a regular value of $\psi_{\varepsilon}$ in $\it{D}$. On the other hand, $(1,1)$ is the unique solution of $\psi_{\varepsilon}(t,s)=(0,0)$ in $\it{D}$. By the definition of Brouwer's degree, we conclude that
\begin{equation*}
\rm{deg}(\psi_{0}, \it{D}, (0,0))= \rm{deg}(\psi_{\varepsilon}, \it{D}, (0,0))= \rm{sgn} \rm{Jac}(\psi_{\varepsilon})(1,1),
\end{equation*}
for $\varepsilon$ small enough.\par
Since
$$
\rm{Jac}(\psi_{\varepsilon})(1,1)= [\gamma'_{+, \varepsilon}(1)+\gamma''_{+, \varepsilon}(1)]\times [\gamma'_{-, \varepsilon}(1)+ \gamma''_{-, \varepsilon}(1)]= \gamma''_{+, \varepsilon}(1)\times \gamma''_{-, \varepsilon}(1)>0
$$
we obtain that
\begin{equation*}
\rm{deg}(\psi_{0}, \it{D}, (0,0))= \rm{sgn}[\gamma''_{+, \varepsilon}(1)\times \gamma''_{-, \varepsilon}(1)]=1,
\end{equation*}
where $\rm{Jac}(\psi_{\varepsilon})$ is the Jacobian determinant of $\psi_{\varepsilon}$ and $\rm{sign}$ denotes the sign function.\par
On the other hand, by (\ref{c23}) we have
\begin{align}\begin{split}\label{c26}
J(g(t,s))&\leq \beta\\
&<\frac{\beta + c_{\infty}}{2}\\
&=c_{\infty} - 2 \left( \frac{c_{\infty}-\beta}{4} \right)\\
&\leq c_{\infty}-2\varepsilon, \quad \forall (t,s)\in \partial \it{D}.
\end{split}\end{align}


By (\ref{c26}) and $(a)$ it follows that $g=h$ on $\partial \it{D}$. Therefore, $\psi_{1}=\psi_{0}$ on $\partial \it{D}$ and consequently
\begin{equation}\label{c27}
\rm{deg}(\psi_{1}, \it{D}, (0,0))= \rm{deg}(\psi_{0}, \it{D}, (0,0))=1,
\end{equation}
which shows that $\psi_{1}(t, s)=(0,0)$ for some $(t, s)\in \it{D}$.

Now, in order  to verify that (\ref{c25}) holds true, we prove that
\begin{equation}\label{fig}
\psi_{1}(1, 1)= \left(J'(h(t,1))h(1,1)^{+}, J'(h(1,1))h(1,1)^{-}\right)=0.
\end{equation}
As a matter of fact, (\ref{fig}) and the fact that $(1, 1)\in \it{D}$, yield $h(1, 1)=\eta(1, g(1, 1))\in \mathcal{M}$.

We argue as follows. If the zero $(t, s)$ of $\psi_{1}$ obtained above is equal to $(1, 1)$ there is nothing to do. On the other hand, if $(t, s)\neq (1, 1)$, we take $0<\delta_{1}<\min\{|t-1|, |s-1|\}$ and consider $$D_{1}:=\left[1-\frac{\delta_{1}}{2}, 1+\frac{\delta_{1}}{2}\right]\times \left[1-\frac{\delta_{1}}{2}, 1+\frac{\delta_{1}}{2}\right].$$

\indent Then, $(t, s)\in D \setminus D_{1}$. Hence, we can repeat for $D_{1}$ the same argument used for $D$, so that we can find a couple $(t_{1}, s_{1})\in D_{1}$ such that $\psi_{1}(t_{1}, s_{1}) = 0$. If $(t_{1}, s_{1})=(1, 1)$, there is nothing to prove. Otherwise, we can continue with this procedure and find in the $n$-th step that (\ref{fig}) holds, or produce a sequence $(t_{n}, s_{n})\in D_{n-1}\setminus D_{n}$ which converges to $(1, 1)$ and such that
\begin{equation}\label{italo}
\psi_{1}(t_{n}, s_{n})=0,\quad \mbox{ for every } n\in \N.
\end{equation}
Thus, taking the limit as $n \rightarrow \infty$ in (\ref{italo}) and using the continuity of $\psi_{1}$ we get (\ref{fig}).
Therefore, $u:=u^{+}+u^{-}$ is a critical point of $J$.\par

\smallskip

Finally, we consider the case when $f$ is odd. Clearly, the functional $\psi$ is even. From (\ref{z9}) and (\ref{z10}) we have that $\psi$ is bounded from below in $\mathbb{S}$. Taking into account Lemma \ref{prop2.2} and Lemma \ref{lem2.1}, we can infer that $\psi$ satisfies the Palais-Smale condition on $\mathbb{S}$. Then, by Proposition \ref{propz2} and \cite{Rab}, we can conclude that the functional $J$ has infinitely many critical points.

\bigskip

\indent {\bf Acknowledgements.} 
The authors warmly thank the anonymous referee for her/his useful and nice comments on the paper.
The manuscript was realized within the auspices of the INdAM - GNAMPA Projects 2017 titled: {\it Teoria e modelli per problemi non locali}.  

\addcontentsline{toc}{section}{\refname}

\end{document}